\newsavebox\myboxA
\newsavebox\myboxB
\newlength\mylenA
\newcommand*\xoverline[2][0.75]{%
    \sbox{\myboxA}{$\m@th#2$}%
    \setbox\myboxB\null
    \ht\myboxB=\ht\myboxA%
    \dp\myboxB=\dp\myboxA%
    \wd\myboxB=#1\wd\myboxA
    \sbox\myboxB{$\m@th\overline{\copy\myboxB}$}
    \setlength\mylenA{\the\wd\myboxA}
    \addtolength\mylenA{-\the\wd\myboxB}%
    \ifdim\wd\myboxB<\wd\myboxA%
       \rlap{\hskip 0.5\mylenA\usebox\myboxB}{\usebox\myboxA}%
    \else
        \hskip -0.5\mylenA\rlap{\usebox\myboxA}{\hskip 0.5\mylenA\usebox\myboxB}%
    \fi}
\newcommand{\D}{\mathcal{D}}
\renewcommand{\H}{\mathcal{H}}
\newcommand{\M}{\mathcal{M}}
\newcommand{\CC}{\mathbb{C}}
\newcommand{\HH}{\mathbb{H}}
\newcommand{\NN}{\mathbb{N}}
\newcommand{\RR}{\mathbb{R}}
\newcommand{\ZZ}{\mathbb{Z}}
\newcommand{\hyp}{{\mathrm{hyp}}}
\newcommand{\Mod}{\mathrm{Mod}}
\renewcommand{\mod}{\mathbin{\mathrm{mod}}}
\newcommand{\Id}{\mathrm{Id}}
\let\Sigmaaux\Sigma
\renewcommand{\Sigma}{\mbox{\scalebox{1.05}{$\Sigmaaux$}}}
\newcommand{\Irr}{\mathrm{Irr}}
\newcommand{\Aut}{\mathrm{Aut}}
\newcommand{\End}{\mathrm{End}}
\newcommand{\Aff}{\mathrm{Aff}}
\newcommand{\Sp}{\mathrm{Sp}}
\newcommand{\SL}{\mathrm{SL}}
\newcommand{\SU}{\mathrm{SU}}
\newcommand{\SO}{\mathrm{SO}}
\renewcommand{\sp}{\mathfrak{sp}}
\renewcommand{\sl}{\mathfrak{sl}}
\newcommand{\su}{\mathfrak{su}}
\newcommand{\so}{\mathfrak{so}}
\newcommand{\Sym}{\mathrm{Sym}}
\newcommand{\Sq}{\mathrm{Sq}}
\newtheorem{thm}{Theorem}[section]
\newtheorem{lem}[thm]{Lemma}
\newtheorem{prop}[thm]{Proposition}
\theoremstyle{definition}
\theoremstyle{remark}
\newtheorem{rem}[thm]{Remark}
\begin{document}

	\title{A family of quaternionic monodromy groups of the Kontsevich--Zorich cocycle}
	
	\author{Rodolfo Gutiérrez-Romo}
	\address{Institut de Mathématiques de Jussieu -- Paris Rive Gauche, UMR 7586, Bâtiment Sophie Germain, 75205 PARIS Cedex 13, France.}
	\email{rodolfo.gutierrez@imj-prg.fr}
	
	\begin{abstract}
		For all $d$ belonging to a density-$1/8$ subset of the natural numbers, we give an example of a square-tiled surface conjecturally realizing the group $\SO^*(2d)$ in its standard representation as the Zariski-closure of a factor of its monodromy. We prove that this conjecture holds for the first elements of this subset, showing that the group $\SO^*(2d)$ is realizable for every $11 \leq d \leq 299$ such that $d = 3 \bmod 8$, except possibly for $d = 35$ and $d = 203$.
	\end{abstract}
	
	\maketitle
	\markboth{R. GUTIÉRREZ-ROMO}{QUATERNIONIC MONODROMY GROUPS OF THE K--Z COCYCLE}
	
	\section{Introduction}
	
	A \emph{translation surface} $(X, \omega)$ is a compact Riemann surface equipped with a non-zero Abelian differential. Away from its zeroes, $\omega$ induces an atlas on $X$ all whose changes of coordinates are translations, called a \emph{translation atlas}. Translation surfaces can be packed into a moduli space endowed with a natural $\SL(2, \RR)$-action, given by post-composing with the coordinate charts of the translation atlas. The geometric and dynamical properties of this action have been extensively studied. We refer the reader to the surveys by Forni--Matheus \cite{FM:intro}, Wright \cite{W:translation_surfaces} and Zorich \cite{Z:flat_surfaces} for excellent introductions to the subject.
	
	Integrating over $\omega$ provides coordinate charts for the moduli space of translation surfaces, called \emph{period coordinates}. An \emph{affine invariant manifold} $\M$ is an immersed connected suborbifold of the moduli space of translation surfaces which is locally defined by linear equations having real coefficients and zero constant terms in period coordinates. By the landmark work of Eskin--Mirzakhani \cite{EM:measures} and Eskin--Mirzakani--Mohammadi \cite{EMM:orbits}, affine invariant manifolds coincide with orbit closures of the $\SL(2, \RR)$-action.
	
	The \emph{Hodge bundle} is the vector bundle over $\M$ whose fibres are the homology groups $H_1(M; \RR)$, where $M$ is the underlying topological surface of the elements of $\M$. The Gauss--Manin connection provides a natural way to compare fibres of the Hodge bundle. The \emph{Kontsevich--Zorich cocycle} over $\M$ is the dynamical cocycle over the Hodge bundle induced by the $\SL(2, \RR)$-action. This cocycle is flat for the Gauss--Manin connection.
	
	The \emph{monodromy group} of $\M$ is the group arising from the action of the (orbifold) fundamental group of $\M$ on the Hodge bundle. These groups can be also defined by the action of the Kontsevich--Zorich cocycle on an $\SL(2, \RR)$-invariant subbundle of the Hodge bundle. Moreover, the Kontsevich--Zorich cocycle is semisimple and its decomposition respects the Hodge structure \cite{F:semisimplicity}. Using these facts, Filip \cite{F:zero_exponents} showed that the possible Zariski-closures of the monodromy groups arising from $\SL(2, \RR)$-(strongly-)irreducible subbundles, at the level of real Lie algebra representations and up to compact factors, belong to the following list:
	
	\begin{enumerate}[label=(\roman{*})]
		\item $\sp(2g, \RR)$ in the standard representation;
		\item $\su(p, q)$ in the standard representation;
		\item $\su(p, 1)$ in an exterior power representation;
		\item $\so^*(2d)$ in the standard representation; or
		\item $\so_\RR(n,2)$ in a spin representation.
	\end{enumerate}
	
	Nevertheless, it is not known whether every Lie algebra representation in this list is realizable as a monodromy group \cite[Question 1.5]{F:zero_exponents}. Indeed, it is well-known that every group in the first item is realizable. The groups in the second item were shown to be realizable by Avila--Matheus--Yoccoz \cite{AMY:covers}. Moreover, the group $\SO^*(6)$ in its standard representation (which coincides with $\SU(3,1)$ in its second exterior power representation) is also realizable by the work of Filip--Forni--Matheus \cite{FFM:quaternionic}.
	
	\smallbreak
	The main theorem of this article is the following:
	
	\begin{thm}
		\label{thm:main}
		For each $d$ belonging to a density-$1/8$ subset of the natural numbers, there exists a square-tiled surface conjecturally realizing the group $\SO^*(2d)$ as the monodromy group of an $\SL(2,\RR)$-(strongly-)irreducible piece of its Kontsevich--Zorich cocycle. This conjecture depends on certain linear-algebraic conditions, which can be computationally shown to be true for small values of $d$. In this way, we show that $\SO^*(2d)$ is realizable for every $11 \leq d \leq 299$ in the congruence class $d = 3 \bmod 8$, except possibly for $d = 35$ and $d = 203$.
	\end{thm}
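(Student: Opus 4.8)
The plan is to realize $\SO^*(2d)$ as an arithmetic monodromy group by means of an explicit cyclic cover. I would first attach to $d$ a prime (or prime power) $N$ and build the square-tiled surface $(X,\omega)$ as a $\ZZ/N$-Galois cover of a fixed base origami, with branching data chosen so that the deck group acts by translation automorphisms. Since these automorphisms commute with the Kontsevich--Zorich cocycle, the complexified homology $H_1(X;\CC)$ splits into character eigenspaces $H_\chi$, each preserved by the monodromy, and I would arrange the covering data so that one eigenspace $H_\chi$ has complex dimension exactly $2d$. A Chevalley--Weil dimension count on the base, together with the semisimplicity of the cocycle \cite{F:semisimplicity,F:zero_exponents}, identifies $H_\chi$ with an $\SL(2,\RR)$-(strongly-)irreducible factor; verifying this irreducibility is the first of several genericity hypotheses that the construction must satisfy.

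The second step is the upper bound, that the monodromy on $H_\chi$ is contained in $\SO^*(2d)$. The intersection form is symplectic and defined over $\RR$, so it pairs $H_\chi$ with $H_{\bar\chi}$; combined with complex conjugation it endows $H_\chi$ with a nondegenerate Hermitian form of some signature $(p,q)$, forcing the monodromy into $\mathrm{U}(p,q)$. To refine this to $\SO^*(2d)$ I would produce an antilinear endomorphism $J$ of $H_\chi$ commuting with the cocycle, obtained by composing complex conjugation with the deck-group symmetry, and check that $J^2 = -\Id$. The quaternionic sign $J^2=-\Id$ (as opposed to the real sign $+\Id$) is a Frobenius--Schur-type computation on the character $\chi$ whose outcome is governed by the residue of $d$ modulo $8$; this is exactly the source of the density-$1/8$ congruence $d\equiv 3\bmod 8$ in the statement, and it forces the balanced signature $p=q=d$ and confines the monodromy to $\SO^*(2d)$.

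The decisive third step is the lower bound: Zariski density of the monodromy in $\SO^*(2d)$. From the horizontal and vertical cylinder decompositions of the origami I would write down, combinatorially from the permutation data of the cover, the parabolic affine generators and their action on $H_1(X;\CC)$; restricted to $H_\chi$ these are quasi-unipotent. Since the Zariski closure is reductive (by semisimplicity) and acts irreducibly on $H_\chi$, it is a connected reductive subgroup of $\SO^*(2d)$ irreducible in the standard representation, and the problem becomes that of excluding every proper such subgroup. I would reduce this to a Lie-algebra statement: the logarithms of the unipotent parts of the parabolic generators, together with all their iterated brackets, should span the real Lie algebra $\so^*(2d)$, of dimension $d(2d-1)$. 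This spanning condition is a finite rank computation for an explicit family of matrices with cyclotomic entries, and it is precisely the ``linear-algebraic condition'' of the statement.

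The main obstacle is this spanning condition, which I do not expect to establish uniformly in $d$---hence the conjectural phrasing for general $d$. Unconditionally, the construction and forced containment above, together with the reduction just described, show that $\SO^*(2d)$ is realized \emph{whenever} the spanning condition (and the auxiliary genericity hypotheses, irreducibility and nondegeneracy of $J$) hold. To obtain the unconditional range $11\le d\le 299$ with $d\equiv 3\bmod 8$, I would, for each such $d$, instantiate the cover, assemble the explicit generators on $H_\chi$, and compute the dimension of the Lie algebra they generate, confirming it equals $d(2d-1)$. For $d=35$ and $d=203$ I expect the method to be merely inconclusive rather than to fail outright: an arithmetic coincidence (both are divisible by $7$) presumably causes a degeneracy in the eigenvalue pattern of the parabolic generators or in the eigenspace decomposition, so that the genericity hypotheses underlying the reduction cannot be certified and these two values are left open. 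Combining the containment of the second step with the verified spanning for the remaining $d$ yields that the Zariski closure of the monodromy factor is all of $\SO^*(2d)$, completing the proof.
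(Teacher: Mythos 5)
Your construction starts from a $\ZZ/N$-\emph{cyclic} cover, and this is where the argument breaks down at the most basic level: the containment of the monodromy in $\SO^*(2d)$ cannot be obtained this way. For an abelian deck group every complex irreducible character is one-dimensional and has Frobenius--Schur indicator $0$ or $+1$, never $-1$; there is no quaternionic isotypical component, so the antilinear $J$ with $J^2=-\Id$ that you hope to extract from ``complex conjugation composed with the deck-group symmetry'' does not exist. The character eigenspaces of a cyclic cover land you in $\SU(p,q)$ (this is exactly the Avila--Matheus--Yoccoz setting), not in $\SO^*(2d)$. The paper instead takes the deck group to be the quaternion group $Q_8$ acting on eight copies of a $d$-square staircase origami; the $(-1)$-eigenspace $H_1^-$ of the central involution is then $2d$ copies of the genuinely quaternionic $2$-dimensional representation of $Q_8$, and the containment in $\SO^*(2d)$ follows from the Matheus--Yoccoz--Zmiaikou constraints for quaternionic isotypical components. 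Relatedly, your attribution of the congruence $d\equiv 3\bmod 8$ to a Frobenius--Schur computation is not what happens: in the paper the congruence comes from the combinatorics of the cover (it forces four singularities of order $2d-1$ and makes the relevant cylinder decompositions close up into exactly eight cylinders), while the density $1/8$ is this congruence class intersected with a full-density set $\D$ of ``good'' dimensions.

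Your third step is also not how the lower bound is, or realistically could be, obtained. Generating $\so^*(2d)$ by iterated brackets of logarithms of parabolics is a computation in a Lie algebra of dimension $d(2d-1)$, which at $d=299$ is about $1.8\times 10^5$; this is not the ``linear-algebraic condition'' of the statement. The paper's route is much lighter: since $d$ is odd, Filip's results force zero Lyapunov exponents on this block, and his classification (\Cref{thm:zero_lyap_constraints}) then restricts the Zariski closure of a \emph{strongly irreducible} flat piece to $\su(p,q)$, $\Lambda^r(\su(p,1))$, or $\so^*(2d)$. For $d\in\D$ (i.e.\ $2d$ is not a nontrivial binomial coefficient $\binom{p+1}{r}$) a dimension count shows $\so^*(2d)$ is strictly smaller than the alternatives, so the containment from step two pins the group down exactly --- provided one proves strong irreducibility. \emph{That} is the computational condition: explicit Dehn multi-twists along $d$ rational directions, with verifiable linear-algebraic hypotheses on the waist curves (\eqref{cond3}--\eqref{cond5}), checked by machine for $11\le d\le 299$. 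Finally, the exclusion of $d=35$ and $d=203$ has nothing to do with divisibility by $7$: these are precisely the values in the range with $2\cdot 35=\binom{8}{4}$ and $2\cdot 203=\binom{29}{2}$, i.e.\ the values not in $\D$, for which the dimension count cannot rule out an exterior-power representation of $\su(p,1)$.
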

	\smallbreak
	
	Indeed, as was done by Filip--Forni--Matheus \cite{FFM:quaternionic}, we will show that these groups seem to arise in quaternionic covers of simple square-tiled surfaces.
	
	This article is organized as follows. In \Cref{sec:preliminaries}, we cover the required background on monodromy groups of square-tiled surfaces.  \Cref{sec:construction}	 shows the construction of the explicit family of square-tiled surfaces arising as quaternionic covers. Finally, we compute the desired monodromy groups in \Cref{sec:computation}.
	
	\section{Preliminaries}
	
	\label{sec:preliminaries}
	
	\subsection{Monodromy groups}
	
	Monodromy groups are, in general, a way to encode how a space relates to its universal cover. In the case of the Kontsevich--Zorich cocycle, they are defined as follows: given an affine invariant manifold (or orbit closure) $\M$, we define its monodromy group as the image of the natural map $\pi_1(\M) \to \Sp(H_1(M; \RR))$, where $M$ is an underlying topological surface and $\pi_1(\M)$ is the orbifold fundamental group. This means that they encode the homological action of the mapping classes identifying different points of the Teichmüller space to the same point of the moduli space.
		
	By the Hodge bundle we mean the vector bundle over $\M$ having $H_1(M; \RR)$ as the fibre over every point. The Kontsevich--Zorich cocycle is the dynamical cocycle defined over the Hodge bundle by the $\SL(2, \RR)$-action. An $\SL(2, \RR)$-invariant subbundle $E$ is a subbundle for which $g \cdot E_X = E_{g \cdot X}$ for every $X \in \M$ and $g \in \SL(2, \RR)$. A flat subbundle $E$ is a subbundle which is flat for the Gauss--Manin connection. Observe that a flat subbundle is necessarily $\SL(2, \RR)$-invariant, since if the curvature vanishes then the parallel transport is done along $\SL(2, \RR)$-orbits in the ``obvious'' way. The converse is not true in general: the flatness condition requires no curvature in every possible direction, including those which are not reachable by the $\SL(2, \RR)$-action.
		
	The Hodge bundle can be decomposed into irreducible pieces and monodromy groups can be defined for such pieces. One then has the following \cite[Theorem 1.1]{F:zero_exponents}:
	
	\begin{thm} \label{thm:zero_lyap_constraints}
		Let $E$ be a strongly irreducible flat subbundle of the Hodge bundle over some affine invariant manifold $\M$. Then, the presence of zero Lyapunov exponents implies that the Zariski-closure of the monodromy group has at most one non-compact factor, which, up to finite-index, is equal at the level of Lie group representations to:
		\begin{itemize}
			\item $\SU(p, q)$ in the standard representation;
			\item $\SU(p, 1)$ in any exterior power representation; or
			\item $\SO^*(2d)$ in the standard representation for some odd $d$.
		\end{itemize}
	\end{thm}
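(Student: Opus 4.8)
The plan is to exploit that, as a flat subbundle of the Hodge bundle, $E$ underlies a polarized integral variation of Hodge structure (VHS) of weight one, and to read off the allowed structure of the Zariski-closure $G$ of the monodromy from this. First I would record the group theory forced by the VHS. By Deligne's semisimplicity theorem (in the form underlying the cited semisimplicity of the cocycle), the identity component $G^\circ$ is reductive; strong irreducibility kills invariant vectors and the symplectic polarization forbids a central torus from acting nontrivially, so $G^\circ$ is semisimple and I may write it as an almost-direct product of $\RR$-simple factors $G^\circ = G_1 \cdots G_r$. The weight-one Hodge decomposition $E_\CC = E^{1,0} \oplus E^{0,1}$ is encoded by the derivative $\mathbf{h} \in \mathfrak{g}$ of the Hodge (Deligne) homomorphism, a grading element acting as $+\tfrac12$ on $E^{1,0}$ and $-\tfrac12$ on $E^{0,1}$.

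Next I would show that at most one factor is noncompact. Decomposing the irreducible $G_\CC$-representation $E_\CC$ as an external tensor product $\bigotimes_i V_i$ over the simple factors and writing $\mathbf{h} = \sum_i \mathbf{h}_i$ with each $\mathbf{h}_i$ a grading element of $\mathfrak{g}_i$, the requirement that the only eigenvalues of $\mathbf{h}$ be $\pm\tfrac12$ forces all but one $\mathbf{h}_i$ to vanish, since a nonzero grading on two distinct tensor factors already produces eigenvalues outside $\{\pm\tfrac12\}$. A factor with $\mathbf{h}_i = 0$ preserves the Hodge decomposition, hence the positive-definite Hodge metric on its tensor factor, and is therefore compact. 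So exactly one factor $G_0$ can be noncompact, it carries the entire Hodge grading, and it must be of Hermitian type: $G_0/K_0$ is a Hermitian symmetric space with holomorphic tangent space $\mathfrak{p}^- \subset \mathfrak{g}_{0,\CC}$ acting on $E_\CC$ by maps $E^{1,0} \to E^{0,1}$.

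The conceptual crux is to translate the presence of a zero exponent into a representation-theoretic condition on $G_0$. Restricting the Higgs field (Kodaira--Spencer map) of the VHS to the Teichm\"uller geodesic direction gives, at $\mu$-almost every point $X$, an element $A_X \in \mathfrak{p}^-$ regarded as a map $E^{1,0}_X \to E^{0,1}_X$. The analytic input I would extract from the study of the Hodge norm by Forni and Filip is that the number of non-zero Lyapunov exponents equals the generic rank of this map; by Zariski-density of the monodromy in $G$, that rank coincides with the generic rank of $\rho(A)$ for $A \in \mathfrak{p}^-$. Hence \emph{a zero exponent is present precisely when a generic element of $\mathfrak{p}^-$ fails to be an isomorphism} $E^{1,0} \to E^{0,1}$, equivalently when $\dim E^{1,0} > \max_{A \in \mathfrak{p}^-} \mathrm{rank}\,\rho(A)$.

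It then remains to run through the classification of $\RR$-simple Hermitian-type groups $G_0$ together with their irreducible symplectic representations $E_\CC$ and decide, case by case, whether a generic $A \in \mathfrak{p}^-$ has full rank; this bookkeeping is where the real work lies. For $\sp(2n,\RR)$ in the standard representation, $\mathfrak{p}^- \cong \Sym^2 E^{1,0}$, whose generic element is an invertible symmetric form, so no zero exponent occurs and the case is excluded. For $\so_\RR(n,2)$ in a spin representation, $\mathfrak{p}^-$ acts by Clifford multiplication by vectors, invertible for a generic non-isotropic vector, so again excluded. The survivors are exactly: $\su(p,q)$ standard, where $\mathfrak{p}^- \cong \mathrm{Hom}(\CC^p, \CC^q)$ has generic rank $\min(p,q) < \max(p,q)$ once $p \neq q$; $\su(p,1)$ in an exterior power, handled via the induced Hodge grading on $\wedge^k \CC^{p+1}$; and $\so^*(2d)$ standard, where $\mathfrak{p}^- \cong \wedge^2 \CC^d$ consists of skew-symmetric maps whose generic rank is $d$ for $d$ even but only $d-1$ for $d$ odd, since skew forms in an odd number of variables are always degenerate. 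This last dichotomy is precisely what pins down the parity, forcing $d$ odd. The main obstacle is thus this classification together with the verification, in each surviving case, that the representation genuinely carries a polarized weight-one Hodge structure compatible with an invariant symplectic form (for instance, that $\so^*(2d)$ preserves a symplectic rather than a symmetric form on its standard representation exactly when $d$ is odd), so that it can occur as a genuine piece of the cocycle.
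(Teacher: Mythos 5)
This statement is not proved in the paper at all: it is quoted verbatim as Theorem~1.1 of Filip's \emph{Zero Lyapunov exponents and monodromy of the Kontsevich--Zorich cocycle} (the reference \texttt{F:zero\_exponents}), so there is no internal proof to compare your attempt against. What you have written is a blind reconstruction of the architecture of Filip's argument, and at that level it is essentially faithful: reduce to a reductive algebraic monodromy group via semisimplicity of the VHS, use the weight-one grading element to show that all but one simple factor preserves the Hodge decomposition (hence the positive-definite Hodge metric, hence is compact), conclude that the remaining factor is of Hermitian type, translate the presence of zero exponents into degeneracy of the second fundamental form valued in $\mathfrak{p}^-$, and then run the case analysis --- with the $\so^*(2d)$ parity coming exactly from the fact that skew-symmetric $d\times d$ matrices have rank at most $d-1$ when $d$ is odd.

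That said, your sketch papers over the two places where the real content lies, and contains two imprecisions worth flagging. First, the analytic input is Forni's criterion, which gives an \emph{inequality} (full generic rank of the second fundamental form forces all exponents to be non-zero); your claimed \emph{equality} between the number of non-zero exponents and the generic rank is stronger than what is needed or known in this generality, and one must also justify that the actual Kodaira--Spencer data of the family, not an arbitrary element of $\mathfrak{p}^-$, achieves the generic rank --- this is where Zariski-density of the monodromy is genuinely used and is nontrivial. Second, the assertion that the symplectic polarization ``forbids a central torus'' is not right as stated: the centre of $\mathrm{U}(p,q)$ preserves a symplectic form; what is true is that the polarization forces any central torus to be compact, which is why the theorem only controls the non-compact factor up to finite index and compact factors. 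Finally, the classification step you defer to ``bookkeeping'' (irreducible symplectic representations of Hermitian-type real simple groups compatible with a polarized weight-one Hodge structure, including the tensor-product and spin cases) is the content of an entire companion paper of Filip's and cannot be dispatched in a sentence; in particular the $\so_\RR(n,2)$ spin case requires checking that generic elements of the abelian subalgebra $\mathfrak{p}^-$ (not generic non-isotropic vectors) act invertibly. So: right skeleton, but the proposal is an outline of the cited proof rather than a proof.
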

	
	Observe that this is a ``refined'' version of the constraints in the previous section, under stronger hypotheses.
	
	\subsection{Square-tiled surfaces}
	
	A square-tiled surface is a particular kind of translation surface defined as a finite cover of the unit square torus branched over a single point. That is, we say that a translation surface $(X, \omega)$ is \emph{square-tiled} if there exists a covering map $\pi\colon X \to \RR^2/\ZZ^2$, which is unramified away from $0 \in \RR^2/\ZZ^2$, and $\omega = \pi^*(dz)$, where $dz$ is the Abelian differential on $\RR^2/\ZZ^2$ induced by the natural identification $\RR^2 \simeq \CC$. We will often write $X$ to refer to $(X, \omega)$ for simplicity.
	
	Combinatorially, a square-tiled surface can be defined as a pair of \emph{horizontal} and \emph{vertical} permutations $h, v \in \Sym(\Sq(X))$, where $\Sq(X)$ is some finite set that we interpret as the \emph{squares} of $X$. These two permutations can be obtained from our original definition as the deck transformations induced respectively by the curves $t \mapsto (t,0)$ and $t \mapsto (0,t)$, with $t \in [0, 1]$, and the set of squares can be defined to be $\Sq(X) = \pi^{-1}((0,1)^2)$. Conversely, we can glue squares horizontally using $h$ and vertically using $v$ and define $\omega$ to be the pullback of $dz$ in each square to obtain a square-tiled surface as in the original definition.
	
	\subsubsection{\texorpdfstring{$\SL(2,\RR)$}{SL(2,R)}-action and monodromy groups}
	
	Every square-tiled surface $X$ is a Veech surface, that is, its $\SL(2,\RR)$-orbit is closed. In particular, this implies that any $\SL(2,\RR)$-invariant subbundle of the Hodge bundle over the orbit $\SL(2,\RR) \cdot X$ is actually flat. Therefore, \Cref{thm:zero_lyap_constraints} can be applied for any $\SL(2,\RR)$-(strongly-)irreducible subbundle.
	
	We say that square-tiled surface $X$ is \emph{reduced} if the covering map $\pi$ cannot be factored through another non-trivial covering of the torus. In this case, the elements $g \in \SL(2, \RR)$ such that $g \cdot X$ is a square-tiled surface are exactly $\SL(2, \ZZ)$. It is often the case that we study the action of $\SL(2, \ZZ)$ on $X$ instead of the entire $\SL(2, \RR)$-action, since square-tiled surfaces can be represented in purely combinatorial terms. The \emph{Veech group} of $X$, usually denoted $\SL(X)$, is the subgroup of $\SL(2, \ZZ)$ stabilizing $X$. It is always an arithmetic subgroup of $\SL(2, \ZZ)$ and its index coincides with the cardinality of $\SL(2, \ZZ) \cdot X$. Every square-tiled surface that we will consider is reduced.
	
	A square-tiled surface may also have non-trivial automorphisms. In this case, the $\SL(2, \ZZ)$-action does not immediately induce a homological action on the Hodge bundle. Indeed, automorphisms are precisely the reason why orbit closures are, in general, orbifolds and not manifolds. More precisely, we define an \emph{affine homeomorphism} as an orientation preserving homeomorphism of $X$ whose local expressions (with respect to the translation atlas) are affine maps of $\RR^2$. We denote the group of affine homeomorphisms by $\Aff(X)$. We may extract the linear part of an affine homeomorphism to get a surjective homomorphism $\Aff(X) \to \SL(X)$. The kernel of this homomorphism is the group $\Aut(X)$ of automorphisms of $X$. This can be encoded in the form of a short exact sequence:
	\[
		1 \to \Aut(X) \to \Aff(X) \to \SL(X) \to 1.
	\]
	In other words, if $M$ is the underlying topological surface of $X$, then $\Aut(X)$ is precisely the subgroup of $\Mod(M)$ stabilizing a lift of $X$ to the Teichmüller space of translation surfaces. In this sense, it measures to which extent the $\Mod(M)$-action fails to be free at $X$. Automorphisms can also be defined combinatorically: they are the elements of $\Sym(\Sq(X))$ that commute with both $h$ and $v$. It is well-known that if $X$ has only one singularity, then it has no non-trivial automorphisms.
	
	The homology group $H_1(M; \RR)$ admits a splitting $H_1^{\mathrm{st}}(M) \oplus H_1^{(0)}(M)$ into symplectic and mutually symplectically orthogonal subspaces. The subspace $H_1^{\mathrm{st}}(M)$ is two-dimensional and is usually called the \emph{tautological plane}. It is spanned the following two cycles: the sum of all bottom horizontal sides of the squares of $X$ oriented rightwards, and the sum of all left vertical sides of the squares of $X$ oriented upwards. The subspace $H_1^{(0)}(M)$ consists of the \emph{zero-holonomy cycles}, that is, the cycles $c$ such that $\int_c \omega = 0$.
	
	Let $\rho \colon \Aff(X) \to \Sp(H_1(M; \RR))$ be the representation induced by the homological action of $\Aff(X)$. By restricting this representation to an invariant subspace, we obtain a monodromy representation of a subbundle of the Hodge bundle. We define the \emph{monodromy group} of this subbundle to be the image of this representation.
	
	The group $\rho(\Aff(X))$ preserves the splitting $H_1(M; \RR) = H_1^{\mathrm{st}}(M) \oplus H_1^{(0)}(M)$. Moreover, the space $H_1^{\mathrm{st}}(M)$ is also irreducible and its monodromy group is a finite-index subgroup of $\SL(2, \ZZ) = \Sp(2, \ZZ)$ which can be identified with $\SL(X)$. The subspace $H_1^{(0)}(M)$ is in general reducible. Therefore, understanding monodromy groups means understanding the irreducible pieces of $H_1^{(0)}(M)$ and the way $\rho(\Aff(X))$ acts on them.
	
	\subsubsection{Constraints for monodromy groups}
	
	\label{sec:constraints}
	
	Let $G = \Aut(X)$. The vector space $H_1(M; \RR)$ has a structure of a $G$-module induced by the representation $G \to \Sp(H_1(M; \RR))$. Since $G$ is a finite group, it possesses finitely many irreducible representations over $\RR$ which we denote $\Irr_\RR(G)$. The $G$-module $H_1(M; \RR)$ can be decomposed as a direct sum of irreducible representations. That is:
	\[
		H_1(M; \RR) = \bigoplus_{\alpha \in \Irr_\RR(G)} V_\alpha^{\oplus n_\alpha},
	\]
	where each $V_\alpha$ is an irreducible subspace of $H_1(M; \RR)$ on which $G$ acts as the representation $\alpha$.
	
	We can collect the same $G$-irreducible representations into the so-called \emph{isotypical components}. That is, let $W_\alpha = V_\alpha^{\oplus n_\alpha}$ and then:
	\[
		H_1(M; \RR) = \bigoplus_{\alpha \in \Irr_\RR(G)} W_\alpha.
	\]
	The group $\rho(\Aff(X))$ does not, a priori, respect this decomposition because a general affine homeomorphism may not commute with every automorphism. However, since $G$ is a finite group, there exists a finite-index subgroup of $\Aff_*(X) \leq \Aff(X)$ whose every element commutes with every element of $G$. Replacing $\Aff(X)$ by some finite-index subgroup preserves the Zariski-closure of the resulting monodromy group.
	
	Given an irreducible representation $\alpha$ of $G$, we may define an associative division algebra $D_\alpha$: the centralizer of $\alpha(G)$ inside $\End_\RR(V_\alpha)$. Up to isomorphism, there are three associative real division algebras:
	\begin{itemize}
		\item $D_\alpha \simeq \RR$, and $\alpha$ is said to be real;
		\item $D_\alpha \simeq \CC$, and $\alpha$ is said to be complex; or
		\item $D_\alpha \simeq \HH$, and $\alpha$ is said to be quaternionic.
	\end{itemize}
	
	The following theorem \cites[Section 3.7]{MYZ:origamis}{MYZ:corrigendum} relates these cases to constraints for monodromy groups:
	\begin{thm} \label{thm:contraints}
		The Zariski-closure of the group $\rho(\Aff_*(X))|_{W_\alpha}$ is contained in:
		\begin{itemize}
			\item $\Sp(2g_\alpha, \RR)$ if $\alpha$ is real;
			\item $\SU(p_\alpha, q_\alpha)$ if $\alpha$ is complex; or
			\item $\SO^*(2d_\alpha)$ if $\alpha$ is quaternionic.
		\end{itemize}
	\end{thm}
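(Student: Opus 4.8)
The plan is to reduce the statement to the representation theory of the finite group $G=\Aut(X)$ carrying the symplectic intersection form $\omega$, and to identify $\rho(\Aff_*(X))|_{W_\alpha}$ with the isometry group of a sesquilinear form induced on the multiplicity space of $\alpha$ over $D_\alpha$. First I would record the two structural inputs. By the defining property of $\Aff_*(X)$, every $\rho(g)$ with $g\in\Aff_*(X)$ commutes with $\rho(G)$, hence is $G$-equivariant and preserves each isotypical component $W_\alpha$, acting on it through $\End_G(W_\alpha)$. Moreover $\rho$ lands in $\Sp(H_1(M;\RR))$, so each $\rho(g)$ preserves $\omega$. Averaging an inner product over $G$ produces a $G$-invariant positive-definite form, so every real $G$-representation is self-dual; consequently the $\omega$-pairing $W_\beta\to W_\alpha^*$ is a $G$-map into a copy of $W_\alpha$ and thus vanishes for $\beta\not\cong\alpha$. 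Hence distinct isotypical components are $\omega$-orthogonal and $\omega|_{W_\alpha}$ is nondegenerate, so $\rho(\Aff_*(X))|_{W_\alpha}$ lies in the subgroup of $\End_G(W_\alpha)^\times$ preserving $\omega|_{W_\alpha}$.

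Next I would make the Morita decomposition explicit. With $M_\alpha=\mathrm{Hom}_G(V_\alpha,W_\alpha)$ the multiplicity space, one has $W_\alpha\cong V_\alpha\otimes_{D_\alpha}M_\alpha$, a free $D_\alpha$-module of rank $n_\alpha$, and $\End_G(W_\alpha)\cong\End_{D_\alpha}(M_\alpha)\cong M_{n_\alpha}(D_\alpha)$. The $G$-invariant form on $V_\alpha$ is unique up to scalar and determines an involution $\sigma$ of $D_\alpha$ (its adjoint) together with a symmetry sign $\epsilon_0\in\{\pm1\}$. Transferring $\omega|_{W_\alpha}$ through this decomposition yields a nondegenerate $\sigma$-sesquilinear form $h_\alpha$ on $M_\alpha$, whose symmetry type is forced by the requirement that the product form be antisymmetric, and $\rho(\Aff_*(X))|_{W_\alpha}$ is contained in the unitary group of $h_\alpha$.

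It then remains to run the Frobenius--Schur trichotomy. If $D_\alpha=\RR$ then $\sigma=\mathrm{id}$ and $\epsilon_0=+1$, so $h_\alpha$ is antisymmetric and its isometry group is $\Sp(2g_\alpha,\RR)$ with $2g_\alpha=n_\alpha$. If $D_\alpha=\HH$ then $\sigma$ is quaternionic conjugation and $\epsilon_0=-1$, so $h_\alpha$ is skew-Hermitian and its isometry group is $\SO^*(2d_\alpha)$ with $d_\alpha=n_\alpha$. If $D_\alpha=\CC$ then $\sigma$ is complex conjugation and $h_\alpha$ is Hermitian of signature $(p_\alpha,q_\alpha)$. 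The main obstacle lies in this last case: the full commutant preserving $\omega$ is the unitary group $\mathrm{U}(p_\alpha,q_\alpha)$, which contains the central compact torus of scalars $D_\alpha^\times=\CC^\times$; sharpening the conclusion to $\SU(p_\alpha,q_\alpha)$ requires controlling the determinant along this torus, and this determinant bookkeeping is exactly the delicate point treated in the corrigendum \cite{MYZ:corrigendum}. I would isolate it as the crux and dispatch the real and quaternionic cases by the uniform form-transfer argument above.
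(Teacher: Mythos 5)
The paper gives no proof of this theorem: it is quoted directly from Matheus--Yoccoz--Zmiaikou (Section 3.7) together with its corrigendum, and your sketch is essentially the argument of that reference (isotypical decomposition, $\omega$-orthogonality of distinct isotypical components via Schur's lemma, Morita transfer of the symplectic form to a $\sigma$-sesquilinear form $h_\alpha$ on the multiplicity space, and the $\RR/\CC/\HH$ trichotomy), so it matches the ``paper's proof'' in the only sense available. Two small corrections are worth recording: the positive-definite $G$-invariant form on $V_\alpha$ is symmetric in all three cases (so the sign you call $\epsilon_0$ is $+1$, and it is $h_\alpha$ that the antisymmetry of $\omega$ forces to be $\sigma$-skew-Hermitian, which still yields $\SO^*(2d_\alpha)$ when $D_\alpha=\HH$); and the corrigendum is not about the $\mathrm{U}$-versus-$\SU$ determinant bookkeeping in the complex case but about the correct identification of the quaternionic isometry group as $\SO^*(2d_\alpha)$ --- that is, the case you dispatch as routine is precisely the one that was originally misidentified, and the only one this paper actually relies on.
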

	
	We will exploit these constraints to find the desired groups.
	
	\section{Construction of the family of square-tiled surfaces}
	
	\label{sec:construction}
	
	In this section, we will construct the quaternionic covers that realize the desired groups as the Zariski-closure of the monodromy of a specific flat irreducible subbundle of the Hodge bundle.
	
	Let $d \geq 3$ be an odd integer. We consider a ``staircase'' $X^{(d)}$ with $d$ squares: the square-tiled surface induced by the permutations $(2, 1)(4, 3)\dotsc(d-1,d-2)(d)$ and $(1)(2, 3)(4, 5)\dotsb(d-1, d)$. It belongs to the connected component $\H_{(d+1)/2}(d-1)^{\hyp}$. Its automorphism group is trivial, since it belongs to a minimal stratum.
	
	We construct a cover $\widetilde{X}^{(d)}$ of $X^{(d)}$ as follows: for each element $g$ of the quaternion group $Q = \{1, -1, i, -i, j, -j, k, -k\}$, we take a copy $X_g^{(d)}$ of $X^{(d)}$. We glue the $r$-th right vertical side of $X_g^{(d)}$ to the $r$-th left vertical side of $X_{gi}^{(d)}$. Similarly, we glue the $r$-th top horizontal side of $X_g^{(d)}$ to the $r$-th bottom horizontal side of $X_{gj}^{(d)}$. See \Cref{fig:covering}. This construction coincides, up to relabelling, with that of Filip--Forni--Matheus for $d = 3$ \cite[Section 5.1]{FFM:quaternionic}.
	
	\begin{figure}
		\includegraphics[width=0.7\textwidth]{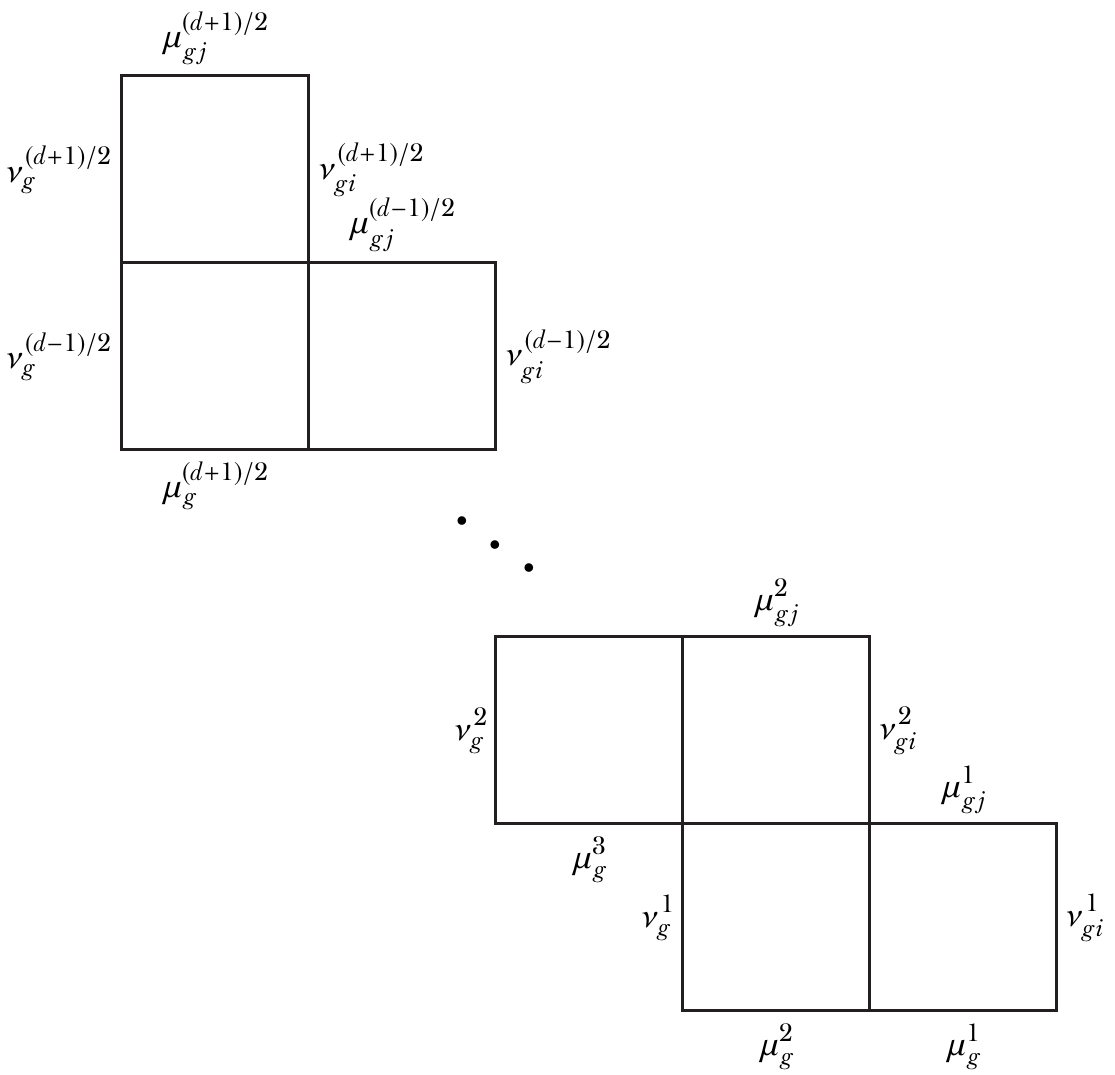}
		\caption{An illustration of $X_g^{(d)}$.}
		\label{fig:covering}
	\end{figure}

	For each $g \in Q$, we can define an automorphism $\varphi_g$ of $\widetilde{X}^{(d)}$ by mapping $X_h^{(d)}$ to $X_{gh}^{(d)}$ in the natural way, that is, preserving the covering map $\widetilde{X}^{(d)} \to X^{(d)}$ for each $h \in Q$. Indeed, the gluings are defined by multiplication on the right, which commutes with multiplication on the left. These are the only automorphisms of $\widetilde{X}^{(d)}$: an automorphism $\psi$ of $\widetilde{X}^{(d)}$ induces an automorphism of $X^{(d)}$ by ``forgetting the labels''. Since the only automorphism of $X^{(d)}$ is the identity, $X_1^{(d)}$ is mapped to some $X_g^{(d)}$ for $g \in Q$ in a way that preserves the covering map $\widetilde{X}^{(d)} \to X^{(d)}$. Thus, $\psi = \varphi_g$ and $\Aut(\widetilde{X}^{(d)}) \simeq Q$. We will denote $\Aut(\widetilde{X}^{(d)})$ by $G$.
	
	From now on, we will restrict to the case $d = 3 \mod 8$. The surface $\widetilde{X}^{(d)}$ has four singularities, each of order $2d-1$. Therefore, $\widetilde{X}^{(d)}$ belongs to the (connected) stratum $\H_{4d-1}((2d-1)^4)$.
	
	\begin{figure}
		\includegraphics[width=0.9\textwidth]{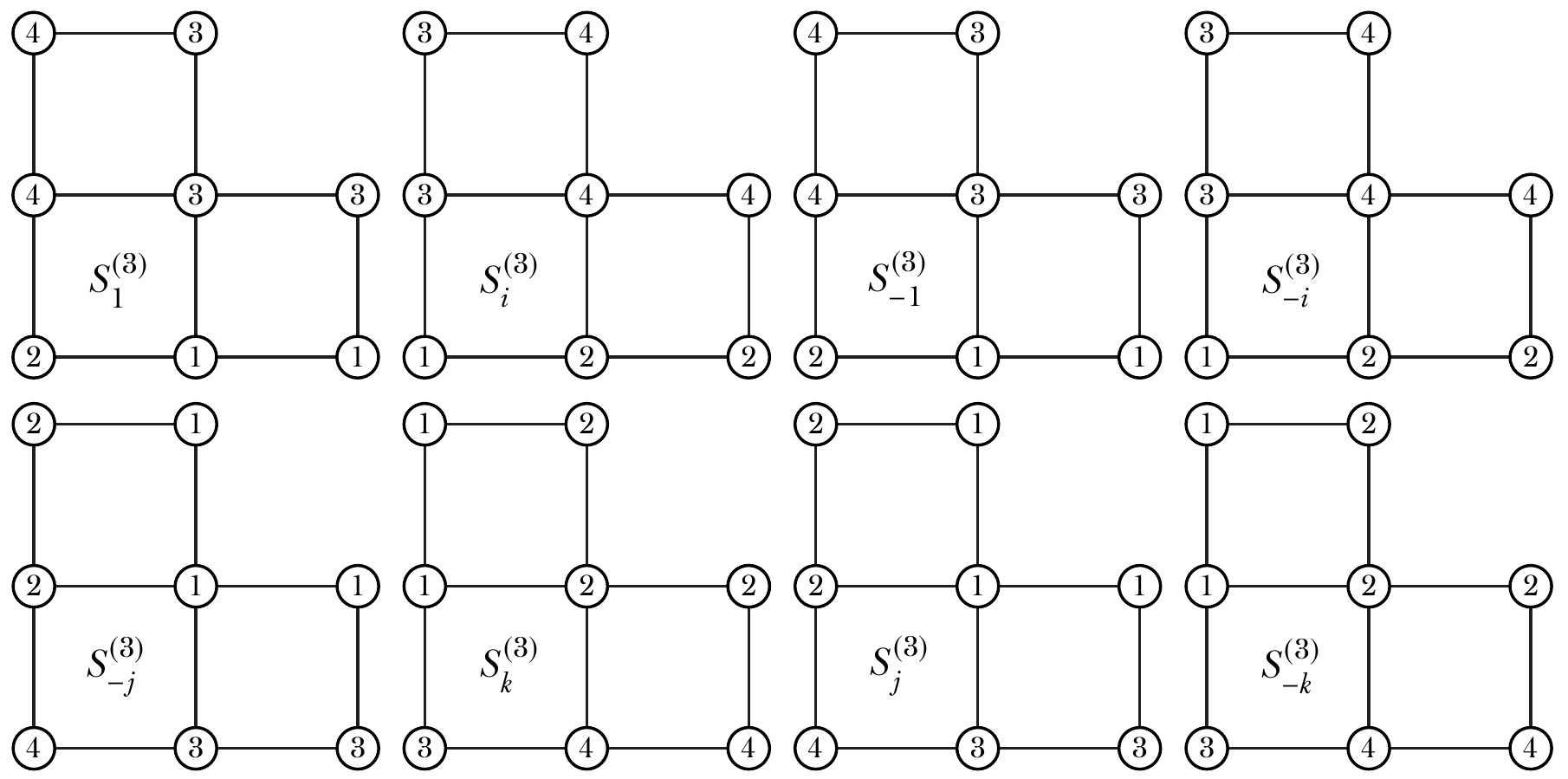}
		\caption{An illustration of $\widetilde{X}^{(3)}$ showing its four singularities.}
		\label{fig:stairs3}
	\end{figure}
	
	Since the automorphism $\varphi_{-1} \in G$ is an involution, it induces a splitting
	\[
		H_1(\widetilde{X}^{(d)}; \RR) = H_1^+(\widetilde{X}^{(d)}) \oplus H_1^-(\widetilde{X}^{(d)}),
	\]
	where $H_1^\pm(\widetilde{X}^{(d)})$ is the subspace of $H_1(\widetilde{X}^{(d)})$ where $\varphi_{-1}$ acts as $\pm \Id$. These subspaces are symplectic and symplectically orthogonal. The subspace $H_1^+(\widetilde{X}^{(d)})$ contains $H_1^{\mathrm{st}}(\widetilde{X}^{(d)})$ and is naturally isomorphic to $H_1(X_{\pm}^{(d)}; \RR)$, where $X_{\pm}^{(d)} = \widetilde{X}^{(d)}/\varphi_{-1}$. This latter surface is an intermediate cover of $X^{(d)}$ over the group $Q/\{1,-1\} \simeq \ZZ/2\ZZ \times \ZZ/2\ZZ$. Since every singularity of $\widetilde{X}^{(d)}$ is fixed by $\varphi_{-1}$, $X_{\pm}^{(d)}$ belongs to the stratum $\H_{2d-1}((d-1)^4)$. Therefore, $H_1^+(\widetilde{X}^{(d)})$ is a $(4d-2)$-dimensional subspace of the $(8d-2)$-dimensional space $H_1(\widetilde{X}^{(d)}; \RR)$ and we obtain that the dimension of $H_1^-(\widetilde{X}^{(d)})$ is $4d$.

	The irreducible representations (over $\CC$) of the group $Q$ can be summarized in the following character table:
	
	\begin{table}[ht!]
		\begin{tabular}{|c|c|c|c|c|c|c|}
			\hline & Dimension & $1$ & $-1$ & $\pm i$ & $\pm j$ & $\pm k$ \\
			\hline $\chi_1$ & $1$ & $1$ & $1$ & $1$ & $1$ & $1$ \\
			\hline $\chi_i$ & $1$ & $1$ & $1$ & $1$ & $-1$ & $-1$ \\ 
			\hline $\chi_j$ & $1$ & $1$ & $1$ & $-1$ & $1$ & $-1$ \\
			\hline $\chi_k$ & $1$ & $1$ & $1$ & $-1$ & $-1$ & $1$ \\
			\hline $\mathop{\mathrm{tr}} \chi_2$ & $2$ & $2$ & $-2$ & $0$ & $0$ & $0$ \\
			\hline 
		\end{tabular}
	\end{table}

	As detailed in \Cref{sec:constraints}, $H_1(\widetilde{X}^{(d)}; \RR)$ can be split into isotypical components associated with such representations. From the character table, we obtain that $H_1^-(\widetilde{X}^{(d)})$ corresponds to $2d$ copies of a $G$-irreducible representation whose character is the quaternionic character $\chi_2$, that is, $H_1^-(\widetilde{X}^{(d)}) = W_{\chi_2}$. Indeed, $\varphi_{-1}$ acts as the identity for any other representation in the table. We obtain the following:
	
	\begin{lem}\label{lem:zero_exp}
		The Zariski-closure of the monodromy group of the flat subbundle induced by $H_1^-(\widetilde{X}^{(d)})$ is a subgroup of $\SO^*(2d)$. Moreover, Kontsevich--Zorich cocycle over this subbundle has at least four zero Lypaunov exponents.
	\end{lem}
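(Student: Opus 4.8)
For the containment in $\SO^*(2d)$, the plan is to verify that $H_1^-(\widetilde{X}^{(d)})$ is invariant under the full monodromy and then apply \Cref{thm:contraints}. Although a single isotypical component need not be preserved by all of $\rho(\Aff(\widetilde{X}^{(d)}))$, this one is, because $H_1^-$ is the $(-1)$-eigenspace of $\rho(\varphi_{-1})$ and $\varphi_{-1}$ corresponds to the unique non-trivial central element of $G \simeq Q$. Since $\Aut(\widetilde{X}^{(d)}) = G$ is normal in $\Aff(\widetilde{X}^{(d)})$, conjugation by an affine homeomorphism is an automorphism of $G$; as the center of $Q$ is characteristic and $\varphi_{-1}$ is its only element of order two, every affine homeomorphism fixes $\varphi_{-1}$ under conjugation, so $\rho(\Aff)$ commutes with $\rho(\varphi_{-1})$ and preserves $H_1^-$. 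Having recorded that the quaternionic real irreducible underlying $\chi_2$ has real dimension four, so that $H_1^-$, of real dimension $4d$, is a $d$-dimensional right $\HH$-module (whence $d_\alpha = d$), \Cref{thm:contraints} gives that the Zariski-closure of $\rho(\Aff_*)|_{H_1^-}$ is contained in $\SO^*(2d)$; since $\Aff_*$ has finite index in $\Aff$, the same holds for the full monodromy group.

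For the zero exponents, the key point is that the cocycle carries a commuting quaternionic structure. I would take the three operators $\rho(\varphi_i)$, $\rho(\varphi_j)$, $\rho(\varphi_k)$ restricted to $H_1^-$. On this subspace $\rho(\varphi_{-1}) = -\Id$, so $\rho(\varphi_i)^2 = \rho(\varphi_{i^2}) = \rho(\varphi_{-1}) = -\Id$ and likewise for $\varphi_j, \varphi_k$, while $\rho(\varphi_i)\rho(\varphi_j) = \rho(\varphi_k) = -\rho(\varphi_j)\rho(\varphi_i)$ because $ij = k = -ji$ in $Q$. Thus these are three pairwise anticommuting complex structures, making $H_1^-$ a right $\HH$-module. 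Crucially, each $\varphi_g$ is an automorphism of $\widetilde{X}^{(d)}$ and therefore acts on homology by a Gauss--Manin-parallel endomorphism commuting with the $\SL(2,\RR)$-action, hence with the Kontsevich--Zorich cocycle along the Teichm\"uller flow. Consequently the cocycle is $\HH$-linear, and since multiplying a vector by a fixed invertible operator does not affect its Lyapunov exponent, each Oseledets subspace is a right $\HH$-submodule. It follows that every Lyapunov exponent of the cocycle on $H_1^-$ has multiplicity divisible by four.

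The conclusion then comes from a parity count that uses the oddness of $d$. The restriction of the cocycle to the symplectic subspace $H_1^-$, of real dimension $4d$, has a Lyapunov spectrum symmetric about zero; writing $p$ for the total multiplicity of its positive exponents, the number of zero exponents equals $4d - 2p$. By the previous step $p$ is divisible by four and satisfies $p \le 2d$; since $d$ is odd, $2d \equiv 2 \pmod 4$, so $p \neq 2d$ and in fact $p \le 2d - 2$. Hence the number of zero exponents is at least $4d - 2(2d - 2) = 4$, which is exactly the claim (and is consistent with the oddness requirement for $\SO^*(2d)$ in \Cref{thm:zero_lyap_constraints}). I expect the only delicate point to be the justification that the quaternionic operators form a genuinely flat family commuting with the cocycle along the flow, so that the divisibility-by-four of the multiplicities is legitimate; granting this, the parity bookkeeping and the identification of the quaternionic dimension are routine.
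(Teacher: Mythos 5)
Your proposal is correct, and the two halves relate to the paper differently. For the containment in $\SO^*(2d)$ you follow the same route as the paper --- identify $H_1^-(\widetilde{X}^{(d)})$ with the isotypical component $W_{\chi_2}$ (a $d$-dimensional right $\HH$-module) and apply \Cref{thm:contraints} --- just with the invariance of $H_1^-$ under all of $\rho(\Aff(\widetilde{X}^{(d)}))$ spelled out via the centrality of $\varphi_{-1}$; the paper simply calls this a direct consequence of \Cref{thm:contraints}. For the zero exponents you take a genuinely different and more self-contained path: the paper invokes \cite[Corollary 5.5, Section 5.3.4]{F:zero_exponents}, which is a Lie-theoretic statement (the standard representation of $\so^*(2d)$ restricted to a Cartan subspace has a four-dimensional zero-weight space when $d$ is odd), whereas you derive the bound directly from the flat quaternionic structure: $\HH$-linearity forces Oseledets multiplicities divisible by four, and the parity $2d \equiv 2 \pmod 4$ then forces at least four vanishing exponents. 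Your argument only uses the $\HH$-module structure and symplecticity, not the full $\SO^*(2d)$ containment, so it is more elementary; the paper's citation is shorter and plugs into the general classification. The one point you rightly flag --- that $\rho(\varphi_i), \rho(\varphi_j), \rho(\varphi_k)$ only commute with the cocycle after restricting to the finite-index subgroup $\Aff_*(\widetilde{X}^{(d)})$ --- is harmless: passing to the corresponding finite cover of the closed $\SL(2,\RR)$-orbit leaves the Lyapunov spectrum unchanged, and on that cover the quaternionic operators form a parallel family commuting with the cocycle, so the divisibility-by-four of the multiplicities is legitimate.
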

	
	\begin{proof}
		The first statement is a direct consequence of \Cref{thm:contraints}. The second statement is a consequence of the first since $d$ is odd  \cite[Corollary 5.5, Section 5.3.4]{F:zero_exponents}.
	\end{proof}
	
	We will prove that, for certain $d$ with $d \mod 8 = 3$, such Zariski-closure is actually $\SO^*(2d)$.
	
	\section{Computation of the monodromy groups}
	
	\label{sec:computation}
	
	\subsection{Dimensional constraints}
	
	In the presence of zero Lyapunov exponents, \Cref{thm:zero_lyap_constraints} states that the only possible Lie algebra representations of the Zariski-closure of the monodromy group of a flat subbundle are $\so^*(2d)$ in the standard representation, $\su(p, q)$ in the standard representation and $\su(p, 1)$ in some exterior power representation. The exterior power representations of $\su(p,1)$ are irreducible and faithful: by complexifying, one obtains $\sl(p+1, \CC)$ whose exterior power representations are known to satisfy these properties.
	
	Let
	\[
		\D = \left\{ d \in \NN \ \Big|\ 2d \neq \binom{p+1}{r} \text{ for every $p$ and $1 < r < p$} \right\}.
	\]
	The following lemma shows that, assuming irreducibility, $\so^*(2d)$ is the only possible Lie algebra for every $d \in \D$. By a slight abuse of notation, we will denote the standard representation of $\so^*(2d)$ by $\so^*(2d)$, the standard representation of $\su(p,q)$ by $\su(p,q)$ and the $r$-th exterior power of the standard representation of $\su(p,1)$ by $\Lambda^r(\su(p,1))$.
	
	\begin{lem}\label{lem:dim_constraints}
		Assume that $\so^*(2d)$, $\su(p,q)$ and $\Lambda^r(\su(p,1))$ act irreducibly on a vector space of the same dimension, that is, $2d = p + q = \binom{p+1}{r}$. Then, for every $d \in \D$ we have that $\dim_\RR \so^*(2d) < \dim_\RR \su(p,q)$ and that $\dim_\RR \so^*(2d) < \dim_\RR \Lambda^r (\su(p,1))$.
	\end{lem}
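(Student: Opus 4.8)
The plan is to insert the dimension-matching hypotheses into explicit formulas for the real dimensions of the three Lie algebras and reduce each comparison to an elementary polynomial inequality; the only genuine subtlety is the role played by the set $\D$ in the exterior-power case. I will use the standard real dimensions
\[
\dim_\RR \so^*(2d) = d(2d-1), \qquad \dim_\RR \su(p,q) = (p+q)^2 - 1,
\]
together with the fact (recalled just before the statement) that the exterior-power representations of $\su(p,1)$ are faithful, so that $\dim_\RR \Lambda^r(\su(p,1)) = \dim_\RR \su(p,1) = (p+1)^2 - 1$ independently of $r$.

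For the first inequality I would substitute $p+q = 2d$ to obtain $\dim_\RR \su(p,q) = 4d^2 - 1$, whence
\[
\dim_\RR \su(p,q) - \dim_\RR \so^*(2d) = (4d^2 - 1) - (2d^2 - d) = 2d^2 + d - 1 = (2d-1)(d+1),
\]
which is positive for every $d \geq 1$. This case uses nothing beyond positivity of $d$; membership in $\D$ is irrelevant here.

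The substance of the lemma is the second inequality, and this is where $\D$ is indispensable. For $1 < r < p$ the binomial $\binom{p+1}{r}$ can be exponentially large in $p$, while $\dim_\RR \su(p,1) = (p+1)^2 - 1$ is only quadratic in $p$; since $\dim_\RR \so^*(2d) = d(2d-1)$ is quadratic in $d$, the desired inequality in fact \emph{fails} for such $r$. The smallest instance is $p = 3$, $r = 2$, $d = 3$, where $\so^*(6) \cong \su(3,1)$ and the two dimensions coincide (both equal $15$) --- exactly the isomorphism recalled in the introduction, and exactly the reason that $3 \notin \D$. The definition of $\D$ is engineered to discard precisely these coincidences: since $d \in \D$, the hypothesis $2d = \binom{p+1}{r}$ is incompatible with $1 < r < p$. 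Hence, for a nontrivial irreducible exterior power (so that $1 \leq r \leq p$), we are forced into $r = 1$ or $r = p$, both of which give $\binom{p+1}{r} = p+1$ and therefore $p = 2d-1$.

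With $p = 2d-1$ the exterior-power case collapses to the computation already carried out: $\dim_\RR \su(p,1) = (p+1)^2 - 1 = 4d^2 - 1$, and subtracting $\dim_\RR \so^*(2d) = 2d^2 - d$ again yields $(2d-1)(d+1) > 0$. I expect the one step requiring care to be this reduction to $r \in \{1, p\}$ through the definition of $\D$; once it is in place, both strict inequalities follow from the single factorization $2d^2 + d - 1 = (2d-1)(d+1)$.
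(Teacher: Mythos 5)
Your proof is correct and follows essentially the same route as the paper: compute the three real dimensions, use $p+q=2d$ to get $4d^2-1 > d(2d-1)$, and use membership in $\D$ to force $r\in\{1,p\}$ so that $p+1=2d$ and the exterior-power case reduces to the same inequality. Your write-up is somewhat more explicit than the paper's about why $\D$ rules out $1<r<p$ (a point the paper relegates to a remark), but the argument is the same.
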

	
	\begin{proof}
		We have that
		\begin{align*}
			\dim_\RR \so^*(2d) &= d(2d-1) \\
			\dim_\RR \su(p,q) &= (p+q)^2 - 1 \\
			\dim_\RR \Lambda^r(\su(p,1)) &= p(p+2).
		\end{align*}
		
		Since $p+q = 2d$, $\dim_\RR \su(p,q) = 4d^2 - 1$. Therefore, $\dim_\RR \so^*(2d) < \dim_\RR \su(p,q)$ for every $d$. This argument also shows that $\dim_\RR \so^*(2d) < \dim_\RR \Lambda^r(\dim_\RR \su(p,1))$ for $r \in \{1, p\}$. Since $\Lambda^r (\su(p,1))$ acts on a $\binom{p+1}{r}$-dimensional space, this concludes the proof as $d \in \D$.
	\end{proof}
	
	\begin{rem}
		To obtain the strict inequality in the previous proof, it is necessary to assume that $d \in \D$. Indeed, if $d \notin \D$ then $2d = \binom{p+1}{r}$ with $1 < r < p$. This implies that $\dim_\RR \Lambda^r(\su(p,1)) \leq \dim_\RR \so^*(2d)$ since $2d = \binom{p+1}{r} \geq \binom{p+1}{2}$, so $p(p+1) \leq 4d$ and it is easy to check that this results in $p(p+2) \leq d(2d-1)$ if $d \geq 3$.
	\end{rem}
	
	\begin{lem}
		The set $\D$ has full density in $\NN$.
	\end{lem}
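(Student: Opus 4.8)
The plan is to show that the complement $\NN \setminus \D$ has density zero by a direct counting argument. Recall that $d \notin \D$ precisely when $2d = \binom{p+1}{r}$ for some $p$ and some $r$ with $1 < r < p$. Writing $m = p+1$, every such $2d$ lies in the set
\[
	B = \left\{ \binom{m}{r} \ \Big|\ 2 \le r \le m-2 \right\}.
\]
Hence it suffices to prove the purely number-theoretic estimate $|B \cap [1, Y]| = O(Y^{2/3})$ for every bound $Y$: applying this with $Y = 2X$ bounds the number of $d \le X$ with $d \notin \D$ by $O(X^{2/3}) = o(X)$, so that the density of $\D$ in $\NN$ equals $1$.

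To estimate $|B \cap [1, Y]|$, I would first use the symmetry $\binom{m}{r} = \binom{m}{m-r}$ to restrict to $2 \le r \le m/2$, and then split according to whether $r = 2$ or $r \ge 3$. For $r = 2$ the relevant values are $\binom{m}{2} = m(m-1)/2$, and $\binom{m}{2} \le Y$ forces $m = O(\sqrt{Y})$; since there is a single value of $r$ for each $m$, this range contributes at most $O(\sqrt{Y})$ elements. The decisive range is $3 \le r \le m/2$: here the monotonicity of binomial coefficients in $r$ (for $r \le m/2$) gives $\binom{m}{r} \ge \binom{m}{3} = m(m-1)(m-2)/6$, so $\binom{m}{r} \le Y$ forces $m = O(Y^{1/3})$. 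For each such $m$ there are at most $m/2 = O(Y^{1/3})$ admissible values of $r$, whence at most $O(Y^{1/3} \cdot Y^{1/3}) = O(Y^{2/3})$ pairs $(m,r)$ in total. Combining the two ranges yields $|B \cap [1, Y]| = O(Y^{2/3})$, as required.

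The one point that must not be glossed over — and the only place where care is needed — is that the two ranges of $r$ genuinely have to be handled separately. The contributions with $r = 2$ already range over $m$ up to $\sqrt{Y}$, so a single crude bound of the shape ``$m = O(\sqrt{Y})$ and $r \le m/2$'' would produce $O(Y)$ pairs and be useless. What rescues the argument is the asymmetry between the two cases: for $r = 2$ there is exactly one value of $r$ per $m$, whereas for $r \ge 3$ the stronger constraint $\binom{m}{3} \le Y$ confines $m$ to the much smaller range $O(Y^{1/3})$, so that even summing over all $r \le m/2$ keeps the count at $O(Y^{2/3})$. Once this split is in place, the bound $o(X)$ on the complement, and hence the full density of $\D$, follows immediately.
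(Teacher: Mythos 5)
Your proof is correct and follows essentially the same route as the paper's: reduce to counting the nontrivial binomial coefficients $\binom{m}{r}$ up to the threshold, and split into a regime where each $m$ contributes $O(1)$ values and a regime where $m$ is forced to be small. The only difference is bookkeeping — you split by $r$ (at $r=2$ versus $3 \le r \le m/2$) where the paper splits by $p$ (according to whether $\binom{p+1}{4} > n$, which caps the contribution of each such $p$ at two values), so you land on the slightly weaker bound $O(n^{2/3})$ in place of the paper's $O(n^{1/2})$; both are $o(n)$ and amply suffice for full density.
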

	
	\begin{proof}
		Let
		\[
			B_{p+1} = \left\{ \binom{p+1}{r} \ \mathrel{\Big|}\ 1 < r < p \right\} \text{ and } B = \bigcup_{p \geq 3} B_{p+1}.
		\]
		We will show that $|B \cap \{1, \dotsc, n\}| / n \to 0$. Observe that
		\[
			|B \cap \{1, \dotsc, n\}| \leq \sum_{p \geq 3} |B_{p+1} \cap \{1, \dotsc, n\}|
		\]
		Now, observe that:
		\begin{itemize}
			\item If $p \geq 3$ and $\binom{p+1}{2} > n$, then $|B_{p+1} \cap \{1, \dotsc, n\}| = 0$;
			\item If $p \geq 5$ and $\binom{p+1}{4} > n$, then $|B_{p+1} \cap \{1, \dotsc, n\}| \leq 2$. 
		\end{itemize}
		We will split the sum in this way to obtain a bound for $|B \cap \{1, \dotsc, n\}|$. Let $p_2$ be the smallest $p \geq 3$ such that $\binom{p+1}{2} > n$ and let $p_4$ be the smallest $p \geq 5$ such that $\binom{p+1}{4} > n$. We have that
		\begin{align*}
			|B \cap \{1, \dotsc, n\}| &\leq \sum_{p = 3}^{p_4 - 1} (p+1) + \sum_{p = p_4}^{p_2 - 1} 2 \leq p_4(p_4-1) + 2(p_2 - 1) \\
			&= \mathrm{O}(n^{1/4})\mathrm{O}(n^{1/4}) + \mathrm{O}(n^{1/2}) = \mathrm{O}(n^{1/2}) = \mathrm{o}(n).
		\end{align*}
	\end{proof}

	
	\subsection{Dehn multi twists}
	
	We will use Dehn multi twists along specific rational directions to prove irreducibility. Assume that there exist rational directions $(p_r, q_r)$ for $0 \leq r < d$ such that:
	
	\begin{enumerate}
		\item the cylinder decomposition along $(p_r, q_r)$ consists of eight cylinders with waist curves $c_g^r$, for $g \in Q$, of the same length. Thus, the Dehn multi twist along $(p_r, q_r)$ can be written as $T_r v = v + n_r \sum_{g \in G} \langle v, c_g^r\rangle c_g^r$; and \label{cond1}
		\item the action of $G$ on the labels is ``well-behaved'', that is, $(\varphi_h)_\ast c_g^r = c_{hg}^r$ for every $0 \leq r < d$, and $g, h \in Q$. \label{cond2}
	\end{enumerate}
	
	Let $Q^+ = \{1,i,j,k\}$ and $\hat{c}_g^r = c_g^r - c_{-g}^r$ for each $g \in Q^+$. If $v \in H_1^-(\widetilde{X}^{(d)})$ we have that
	\begin{align*}
		\langle v, \hat{c}_g^r\rangle\hat{c}_g^r &= \langle v, c_g^r - c_{-g}^r\rangle (c_g^r - c_{-g}^r) \\
		&= \langle v, c_g^r\rangle c_g^r - \langle v, c_{-g}^r\rangle c_g^r - \langle v, c_g^r\rangle c_{-g}^r + \langle v, c_{-g}^r\rangle c_{-g}^r \\
		&= \langle v, c_g^r\rangle c_g^r + \langle v, c_g^r\rangle c_g^r + \langle v, c_{-g}^r\rangle c_{-g}^r + \langle v, c_{-g}^r\rangle c_{-g}^r \\
		&= 2(\langle v, c_g^r\rangle c_g^r + \langle v, c_{-g}^r\rangle c_{-g}^r),
	\end{align*}
	where we used that $(\varphi_{-1})_\ast v = -v$ and that $(\varphi_{-1})_\ast$ is a symplectic automorphism. Therefore, $T_r v = v + \frac{n_r}{2} \sum_{g \in Q^+} \langle v, \hat{c}_g^r\rangle \hat{c}_g^r$. Let $C_r = \langle \hat{c}_g^r \rangle_{g \in Q^+}$. We will also assume the following:
	\begin{enumerate}\setcounter{enumi}{2}
		\item $C = \{\hat{c}_g^r\}_{g,r}$ is a basis of $H_1^-(\widetilde{X}^{(d)})$; \label{cond3}
		\item for each $0 \leq r, s < d$ and $v \in C_r \setminus \{0\}$, $T_s v \neq v$; and \label{cond4}
		\item for any $v \in C_0 \setminus \{0\}$, $C_0 = \langle\{v\} \cup \{(T_0 - \Id)(T_r - \Id)(T_1 - \Id)v \}_{r = 2}^4\rangle$. \label{cond5}
	\end{enumerate}
	
	These conditions are enough to prove that $H_1^-(\widetilde{X}^{(d)})$ is strongly irreducible for the action of $\Aff_*(\widetilde{X}^{(d)})$:
	\begin{lem}
		Assume that \eqref{cond1}--\eqref{cond5} hold. Then, $\Aff_{\ast\ast}(\widetilde{X}^{(d)})$ acts irreducibly on $H_1^-(\widetilde{X}^{(d)})$, where $\Aff_{\ast\ast}(\widetilde{X}^{(d)})$ is any finite-index subgroup of $\Aff_{\ast}(\widetilde{X}^{(d)})$.
	\end{lem}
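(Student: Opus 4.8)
The plan is to show that every nonzero $\rho(\Aff_{\ast\ast})$-invariant subspace $U \subseteq H_1^-(\widetilde{X}^{(d)})$ equals all of $H_1^-(\widetilde{X}^{(d)})$; since the argument uses only that some fixed power of each $T_r$ lies in $\Aff_{\ast\ast}$, it applies uniformly to every finite-index subgroup and hence yields (strong) irreducibility. The first observation I would record is that each $T_r$ is unipotent with $(T_r - \Id)^2 = 0$: the waist curves $c_g^r$ all lie in the single direction $(p_r, q_r)$, so they are represented by disjoint simple closed curves and $\langle c_h^r, c_g^r\rangle = 0$ for all $g, h$, whence also $\langle \hat{c}_h^r, \hat{c}_g^r\rangle = 0$. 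Writing $N_r := T_r - \Id$, this gives $T_r^m - \Id = m N_r$ for every $m$, so replacing the $T_r$ by the powers that actually lie in $\Aff_{\ast\ast}$ scales the operators $N_r$ by nonzero constants only, leaving the spanning conditions \eqref{cond4} and \eqref{cond5} intact. From the formula $N_r v = \tfrac{n_r}{2}\sum_{g \in Q^+}\langle v, \hat{c}_g^r\rangle \hat{c}_g^r$ I read off that $N_r$ maps into $C_r$ and that $\ker N_r$ is the symplectic orthogonal of $C_r$ inside $H_1^-(\widetilde{X}^{(d)})$.

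Next I would push a nonzero vector of $U$ into $C_0$. Fix $0 \neq u \in U$. Because $\{\hat{c}_g^r\}_{g,r}$ is a basis by \eqref{cond3}, we have $\sum_r C_r = H_1^-(\widetilde{X}^{(d)})$, so $\bigcap_r \ker N_r = \bigl(\sum_r C_r\bigr)^{\perp} = 0$ by non-degeneracy of the symplectic form on $H_1^-(\widetilde{X}^{(d)})$; hence $N_r u \neq 0$ for some $r$. Invariance of $U$ under the relevant power of $T_r$ gives $0 \neq N_r u \in U \cap C_r$. Now condition \eqref{cond4}, applied with twisting index $0$, says precisely that $N_0$ is injective on $C_r$, so $N_0 N_r u \neq 0$; as it lies in $C_0$ and in $U$, we conclude $U \cap C_0 \neq 0$.

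I would then bootstrap from $U \cap C_0 \neq 0$ to $U = H_1^-(\widetilde{X}^{(d)})$. Picking $0 \neq v \in U \cap C_0$ and invoking \eqref{cond5}, the vector $v$ together with the vectors $N_0 N_r N_1 v$ for $r = 2, 3, 4$ all lie in $U$ and span $C_0$, so $C_0 \subseteq U$. Finally, for each $s$ condition \eqref{cond4} (with twisting index $s$ and source $C_0$) makes $N_s$ injective on the $4$-dimensional space $C_0$; since $N_s(C_0) \subseteq C_s$ and $\dim C_s = 4$ (again by \eqref{cond3}), this forces $N_s(C_0) = C_s$, whence $C_s \subseteq U$. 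Summing over $s$ and using \eqref{cond3} once more gives $U = H_1^-(\widetilde{X}^{(d)})$.

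The soft linear algebra above is not the real difficulty: the content lives entirely in the hypotheses \eqref{cond1}--\eqref{cond5}, which is why they must be verified computationally for each admissible $d$ (note that $0, 1, 2, 3, 4$ must all be valid twisting indices, so $d \geq 5$, which holds throughout the range of interest). Among the soft steps the only delicate points are the unipotence reduction, which neutralizes the passage to a finite-index subgroup, and the non-degeneracy argument guaranteeing that some $N_r u$ is nonzero so that the chain can be started; once a nonzero vector has been moved into $C_0$, conditions \eqref{cond4} and \eqref{cond5} finish the proof mechanically.
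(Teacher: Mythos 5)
Your proof is correct, and for most of its length it follows the same path as the paper: reduce to the operators $N_r = T_r - \Id$ (justifying the passage to a finite-index subgroup by unipotence of the multi-twists, which the paper dispatches with the one-line remark that the constant $n_r \neq 0$ is irrelevant), use non-degeneracy of the symplectic form together with \eqref{cond3} to find some $r$ with $N_r u \neq 0$, push the resulting vector into $C_0$ via \eqref{cond4}, and then fill out all of $C_0$ via \eqref{cond5}. Where you genuinely diverge is the final step, propagating from $C_0$ to every $C_s$. The paper takes the single nonzero vector $v = (T_s - \Id)\hat{c}_1^0 \in C_s \cap V$ and generates all of $C_s$ by applying the automorphisms $(\varphi_g)_\ast$ for $g \in Q^+$; this uses \eqref{cond2}, the fact that $\Aff_\ast(\widetilde{X}^{(d)})$ commutes with $G$, and an explicit $4\times 4$ determinant computation giving $\bigl(\sum_{g \in Q^+} \mu_g^2\bigr)^2 \neq 0$. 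You instead observe that \eqref{cond4} makes $N_s$ injective on $C_0$, and since $N_s(H_1^-(\widetilde{X}^{(d)})) \subseteq C_s$ with $\dim C_0 = \dim C_s = 4$ by \eqref{cond3}, injectivity forces $N_s(C_0) = C_s \subseteq U$. Your version is shorter, purely dimension-theoretic, and does not invoke \eqref{cond2} at all; the paper's version is the one that explains why the quaternionic symmetry of the waist curves was recorded as a hypothesis in the first place. Both arguments are valid. Your side remarks --- that $(T_r - \Id)^2 = 0$ because waist curves of a common cylinder decomposition are disjoint, so powers of $T_r$ only rescale $N_r$ and leave \eqref{cond4}--\eqref{cond5} intact, and that \eqref{cond5} tacitly requires $d \geq 5$ --- are correct and slightly more careful than what the paper writes.
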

	\begin{proof}
		Let $V \neq \{0\}$ be a subspace of $H_1^-(\widetilde{X}^{(d)})$ on which $\Aff_{\ast\ast}(\widetilde{X}^{(d)})$ acts irreducibly. By \eqref{cond3}, it is enough to prove that $C_r \subseteq V$ for each $0 \leq r < d$.
		
		Since the index of $\Aff_{\ast\ast}(\widetilde{X}^{(d)})$ is finite, some power of $T_r$ belongs to $\Aff_{\ast\ast}(\widetilde{X}^{(d)})$ for every $0 \leq r < d$. Without loss of generality, we can assume $T_r \in \Aff_{\ast\ast}(\widetilde{X}^{(d)})$, since the number $n_r \neq 0$ in the formula for $T_r$ is irrelevant for the proof.
		
		We will first show that $C_0 \subseteq V$. Let $u \in V \setminus \{0\}$. Since $H_1^-(\widetilde{X}^{(d)})$ is symplectic, by \eqref{cond3} there exists $0 \leq r < d$ such that $T_r(u) \neq u$. Clearly, $w = (T_r - \Id)u \in C_r \setminus \{0\}$. Now, by \eqref{cond4}, $v = (T_0 - \Id)w \in C_0 \setminus \{0\}$. Finally, by \eqref{cond5} we have that $C_0 \subseteq V$.
		
		Now, it is enough to show that $(T_r - \Id)C_0 = C_r$. Let $v = (T_r - \Id)\hat{c}_1^0 \in C_r \setminus \{0\}$. Observe that \eqref{cond2} implies that $C_0$ is $G$-invariant. Since $\Aff_{\ast\ast}(\widetilde{X}^{(d)})$ commutes with $G$, we have that $(\varphi_g)_\ast v \in V$ for each $g \in Q^+$. Write $v = \sum_{g \in Q^+} \mu_g \hat{c}_g^r$. By \eqref{cond2}, we have that
				
		\begin{align*}
			(\varphi_i)_\ast v &= -\mu_i \hat{c}_1^r + \mu_1 \hat{c}_i^r - \mu_k \hat{c}_j^r + \mu_j \hat{c}_k^r \\
			(\varphi_j)_\ast v &= -\mu_j \hat{c}_1^r + \mu_k \hat{c}_i^r + \mu_1 \hat{c}_j^r - \mu_i \hat{c}_k^r \\
			(\varphi_k)_\ast v &= -\mu_k \hat{c}_1^r - \mu_j \hat{c}_i^r + \mu_i \hat{c}_j^r + \mu_1 \hat{c}_k^r.
		\end{align*}
		
		Therefore, the matrix of coefficients of $(\varphi_g)_\ast v$ for $g \in Q^+$ is
		\[
			\begin{pmatrix}
				\mu_1 & \mu_i & \mu_j & \mu_k \\ 
				-\mu_i & \mu_1 & -\mu_k & \mu_j \\
				-\mu_j & \mu_k & \mu_1 & -\mu_i \\
				-\mu_k & -\mu_j & \mu_i & \mu_1
			\end{pmatrix}
		\]
		whose determinant is $\left(\sum_{g \in Q^+} \mu_g^2 \right)^2 \neq 0$. We obtain that $\langle (\varphi_g)_\ast v\rangle_{g \in Q^+} = C_r \subseteq V$, which completes the proof.
	\end{proof}
	
	We can now show that this conditions are enough for the monodromy group to be $\SO^*(2d)$:
		
	\begin{prop}
		Assume that $d = 3 \bmod 8$, that $d \in \D$ and that \eqref{cond1}--\eqref{cond5} hold. Then, the Zarisk-closure of the group $\rho(\Aff_*(X))|_{H_1^-(\widetilde{X}^{(d)})}$ is $\SO^*(2d)$.
	\end{prop}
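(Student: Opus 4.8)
The plan is to combine the strong irreducibility just established with the structural constraints of \Cref{thm:zero_lyap_constraints} and the dimension count of \Cref{lem:dim_constraints}, using \Cref{lem:zero_exp} to supply both the ambient group $\SO^*(2d)$ and the zero exponents. Write $G_0$ for the Zariski-closure of $\rho(\Aff_\ast(X))|_{H_1^-(\widetilde{X}^{(d)})}$ and $\mathfrak{g}_0$ for its Lie algebra.

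First I would check that $H_1^-(\widetilde{X}^{(d)})$ is a strongly irreducible flat subbundle so that \Cref{thm:zero_lyap_constraints} applies. Flatness is automatic: $\widetilde{X}^{(d)}$ is square-tiled, hence Veech, so its $\SL(2,\RR)$-orbit is closed and every $\SL(2,\RR)$-invariant subbundle over it is flat. Strong irreducibility is exactly the content of the preceding lemma, which asserts that \emph{every} finite-index subgroup $\Aff_{\ast\ast}(\widetilde{X}^{(d)})$ of $\Aff_\ast(\widetilde{X}^{(d)})$ acts irreducibly on $H_1^-(\widetilde{X}^{(d)})$; moreover, passing to a finite-index subgroup leaves the Zariski-closure of the monodromy unchanged. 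Since \Cref{lem:zero_exp} provides at least four zero Lyapunov exponents, the hypotheses of \Cref{thm:zero_lyap_constraints} are satisfied, so the non-compact factor of $G_0$ is, up to finite index and at the level of Lie group representations, one of $\su(p,q)$ in the standard representation, $\Lambda^r(\su(p,1))$, or $\so^*(2d')$ in the standard representation with $d'$ odd.

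Next I would match dimensions. The space $H_1^-(\widetilde{X}^{(d)})$ is $4d$-dimensional over $\RR$, that is, $2d$-dimensional over $\CC$. Because the non-compact factor acts through the indicated representation on the full space, in the first case $p+q = 2d$, in the second $\binom{p+1}{r} = 2d$, and in the third $d' = d$, so the third option is precisely $\so^*(2d)$ with $d$ odd (consistent with $d = 3 \bmod 8$). By \Cref{lem:zero_exp} we have $G_0 \leq \SO^*(2d)$, whence $\dim_\RR \mathfrak{g}_0 \leq \dim_\RR \so^*(2d) = d(2d-1)$; as the non-compact factor is a summand of $\mathfrak{g}_0$, the same bound holds for it. But \Cref{lem:dim_constraints}, using $d \in \D$, gives $\dim_\RR \su(p,q) = 4d^2-1 > d(2d-1)$ and $\dim_\RR \Lambda^r(\su(p,1)) > d(2d-1)$ whenever the representation dimensions match, contradicting the bound. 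This eliminates the first two options, so the non-compact factor is $\so^*(2d)$ and, in particular, $\dim_\RR \mathfrak{g}_0 \geq d(2d-1)$. Combining the two inequalities yields $\dim_\RR \mathfrak{g}_0 = d(2d-1) = \dim_\RR \so^*(2d)$, forcing $\mathfrak{g}_0 = \so^*(2d)$; since $\SO^*(2d)$ is connected and $G_0$ is a Zariski-closed subgroup with full Lie algebra, we conclude $G_0 = \SO^*(2d)$.

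The step I expect to require the most care is the dimension-matching: one must ensure that the non-compact factor acts on the entire $2d$-dimensional complex space, so that no nontrivial compact factor tensors in to shrink the effective representation dimension and the equalities $p+q = 2d$ and $\binom{p+1}{r} = 2d$ hold exactly, allowing \Cref{lem:dim_constraints} to be invoked verbatim. This is precisely what is encoded by ``equal at the level of Lie group representations'' in \Cref{thm:zero_lyap_constraints}, reinforced by the fact that the quaternionic structure on $H_1^- = W_{\chi_2}$ already singles out $\SO^*(2d)$ as the natural ambient group via \Cref{thm:contraints}.
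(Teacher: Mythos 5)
Your argument is correct and follows essentially the same route as the paper: invoke \Cref{lem:zero_exp} for the zero exponents and the containment in $\SO^*(2d)$, the preceding lemma for strong irreducibility (with flatness automatic since the surface is Veech), \Cref{thm:zero_lyap_constraints} to restrict the possible non-compact factors, and \Cref{lem:dim_constraints} with $d \in \D$ to eliminate the unitary options by a dimension count. The paper's own proof is just a terser version of this; your expanded treatment of the dimension-matching step and the passage from the Lie algebra to the group is a faithful filling-in of the details it leaves implicit.
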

	
	\begin{proof}
		By \Cref{lem:zero_exp}, exactly four Lyapunov exponents of the Kontsevich--Zorich cocycle are zero, so the hypotheses of \Cref{thm:zero_lyap_constraints} are satisfied. To conclude by \Cref{lem:dim_constraints}, it is enough for $\Aff_*(X)$ to act strongly irreducibly on $H_1^-(\widetilde{X}^{(d)})$, which follows from the previous lemma.
	\end{proof}
	
	The next section is then devoted to finding the desired Dehn multi twists.
	
	\subsection{Suitable rational directions} In this section, we will find the desired rational directions $(p_r, q_r)$ and prove \eqref{cond1}--\eqref{cond5} for the specific values of $d$ mentioned in the statement of the main theorem to conclude the proof. Assume that $d = 3 \bmod 8$ for the rest of the section.
	
	The matrices
	\[
		T = \begin{pmatrix}
			1 & 1 \\
			0 & 1
		\end{pmatrix}, \quad
		S = \begin{pmatrix}
			1 & 0 \\
			1 & 1
		\end{pmatrix}
	\]
	generate $\SL(2,\ZZ)$ and, thus, can be used to understand the $\SL(2,\ZZ)$-orbit of a square-tiled surface. The orbit of the ``staircase'' $X^{(d)}$ consists of three elements, which we call $Z^{(d)}$, $X^{(d)}$ and $Y^{(d)}$. See \Cref{fig:stairs_graph}.
	
	\begin{figure}
		\includegraphics[width=\textwidth]{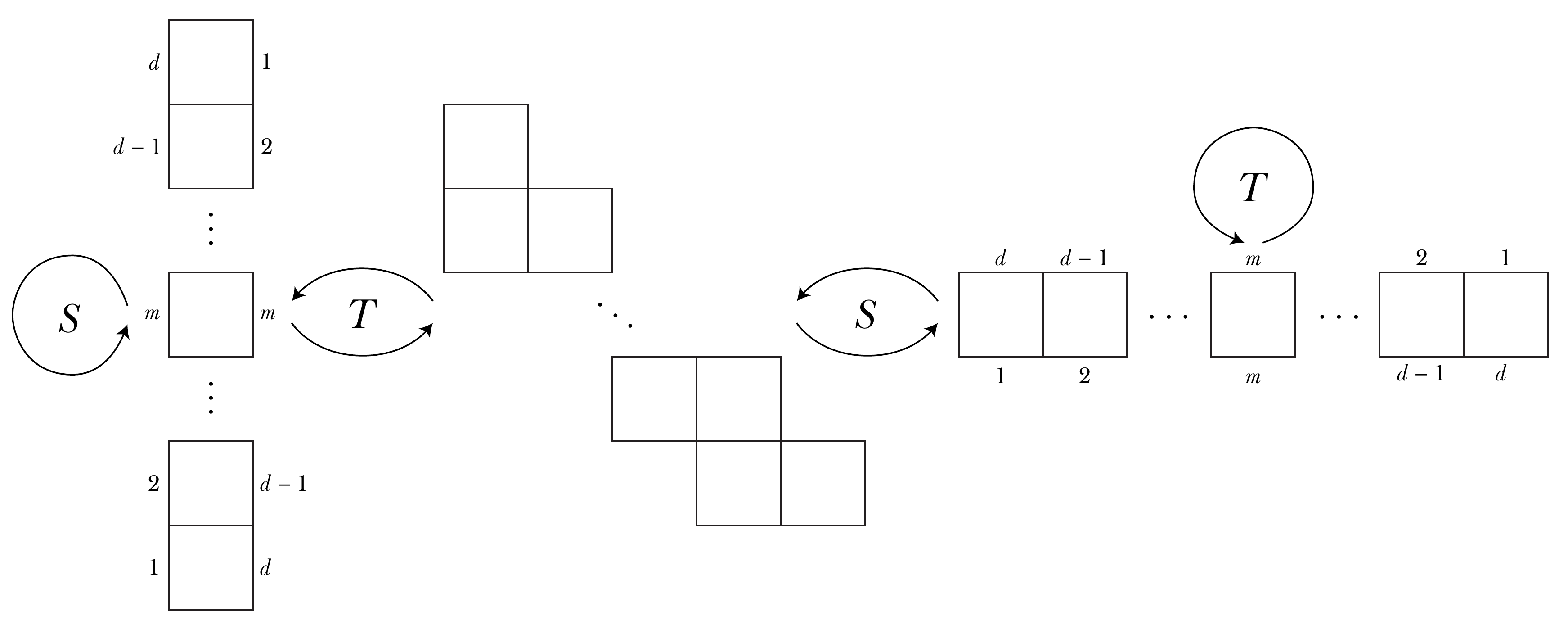}
		\caption{The $\SL(2,\ZZ)$-orbit of $X^{(d)}$ using $T$ and $S$ as generators. It consists of three distinct square-tiled surfaces, which we call $Z^{(d)}$, $X^{(d)}$ and $Y^{(d)}$ from left to right. The labels in the $Y^{(d)}$ and $Z^{(d)}$ show the identification of the sides. Unlabelled horizontal sides are identified with the only horizontal having the same horizontal coordinates, and similarly for unlabelled vertical sides.}
		\label{fig:stairs_graph}
	\end{figure}
	
	Since $\widetilde{X}^{(d)}$ is a cover of $X^{(d)}$, the graph induced by the action on $T$ and $S$ on $\widetilde{X}^{(d)}$ is a cover of the graph in \Cref{fig:stairs_graph}. In other words, if $g \in \SL(2, \ZZ)$ then $g \cdot \widetilde{X}^{(d)}$ is a degree-eight cover of $g \cdot X^{(d)}$. Moreover, since the graph in \Cref{fig:stairs_graph} has only three vertices, writing $g$ in terms of $T$ and $S$ and following the arrows of the graph  us to compute $g \cdot X^{(d)}$, which is useful to understand $g \cdot \widetilde{X}^{(d)}$.
	
	We will use the following rational directions: $(p_r, q_r) = (-(4r + 1), 4r + 3)$ for $0 \leq r < d$.
	
	Observe that the matrix $\begin{pmatrix}
		2r+1 & 2r \\ 4r+3 & 4r+1
	\end{pmatrix}$ maps the direction $(p_r, q_r)$ to $(-1,0)$. Moreover, this matrix can be written as $S^2 T^{2r} S$. By \Cref{fig:stairs_graph}, $S^2 T^{2r}S \cdot X^{(d)} = Y^{(d)}$, so this surface has only one horizontal cylinder.
	
	The matrix $S$ maps $(p_r, q_r)$ to $(-(4r+1), 2)$. The surface $S\cdot \widetilde{X}^{(d)}$, which we call $\widetilde{Y}^{(d)}$, is a degree-eight cover of $S\cdot X^{(d)} = Y^{(d)}$, which we will now describe explicitly.
	
	For each $g \in Q$, consider a copy $Y_g^{(d)}$ of $Y^{(d)}$. Each of these copies consists of $d$ squares. We label the $r$-th bottom side of each square of $Y_g^{(d)}$ with $\eta_g^r$ and the left side of the leftmost square with $\zeta_g$.
	
	Let $m = (d+1)/2$, which satisfies $m = 2 \bmod 4$ since $d = 3 \bmod 8$. There are $m-1$ squares to the left and to the right of $m$ in $Y_g^{(d)}$. The labels of the top sides of the squares to the right of $m$ are:
	\[
		\eta_{gk}^{m-1}, \eta_{-g}^{m-2}, \eta_{-gk}^{m-3}, \eta_g^{m-4}, \dotsc, \eta_{gk}^{5}, \eta_{-g}^{4}, \eta_{-gk}^{3}, \eta_g^{2}, \eta_{gk}^1.
	\]
	The labels of the top sides of the squares to the left of $m$ are:
	\[
		\eta_{-gi}^d, \eta_{gj}^{d-1}, \eta_{gi}^{d-2}, \eta_{-gj}^{d-3}, \eta_{-gi}^{d-4}, \dotsc, \eta_{gi}^{m+3}, \eta_{-gj}^{m+2}, \eta_{-gi}^{m+1}, \eta_{gj}^{m}.
	\]
	In the two previous lists, the group elements in $Q$ follow a $4$-periodic pattern. Finally, we label the rightmost square of $Y_g^{(d)}$ with $\zeta_{-g}$. See \Cref{fig:horiz1} for an illustration.
	
	\begin{figure}
		\includegraphics[width=\textwidth]{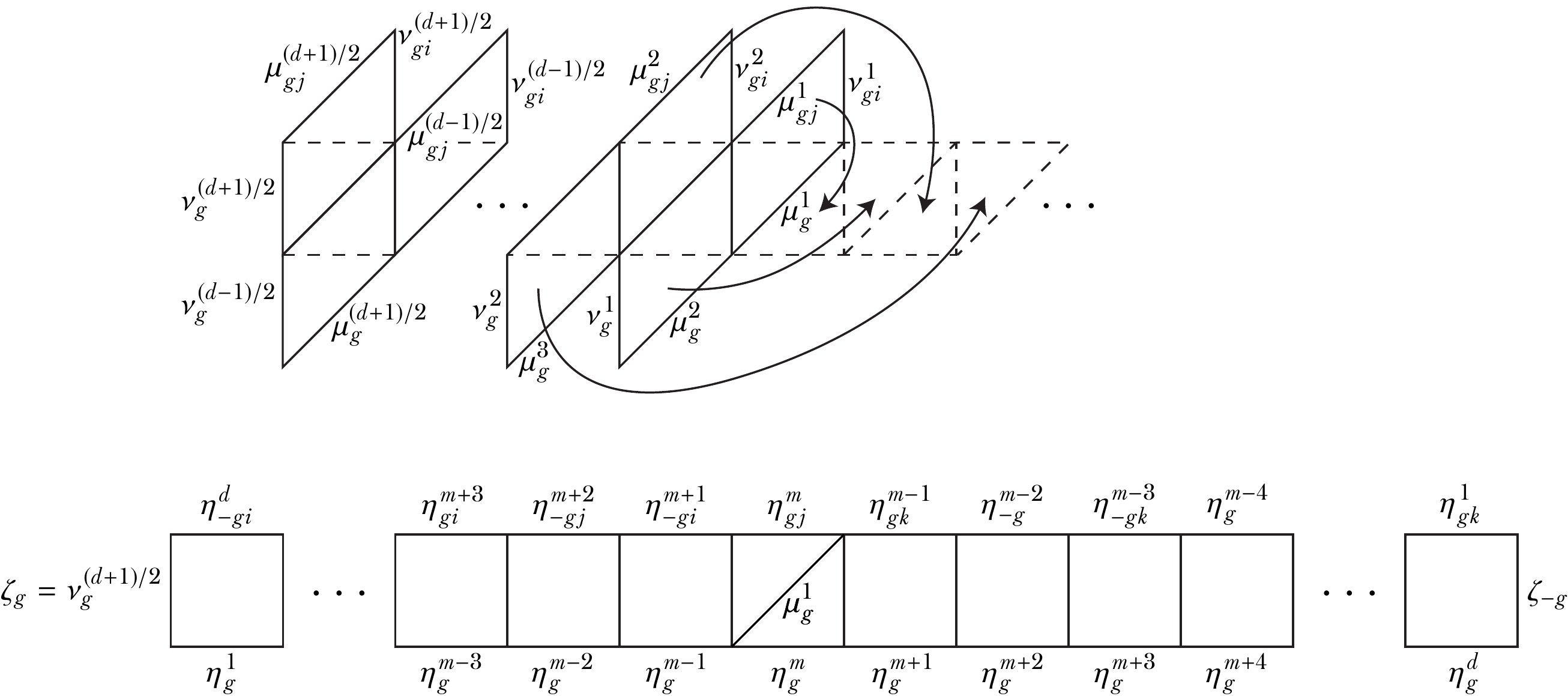}
		\caption{An illustration of $Y_g^{(d)}$ and of the cut-and-paste operations used to obtain this description.}
		\label{fig:horiz1}
	\end{figure}
	
	By a slight abuse of notation, from now on we will use the names $\eta_g^r$ and $\zeta_g$ to refer to the elements of $H_1(\widetilde{Y}^{(d)}, \Sigma; \RR)$ induced by the horizontal or vertical curves joining the two vertices of the side labelled $\eta_g^r$ or $\zeta_g$, oriented either rightwards or upwards. We have that $\Aut(\widetilde{Y}^{(d)}) \simeq Q$, which can be proved in the exact same way as for $\widetilde{X}^{(d)}$. That is, we define an automorphism $\varphi_g$ by mapping $Y_h^{(d)}$ to $Y_{gh}^{(d)}$ and these are the only automorphisms of $\widetilde{Y}^{(d)}$ since $\Aut(Y^{(d)})$ is trivial. The automorphism $\varphi_{-1}$ induces a splitting $H_1(\widetilde{Y}^{(d)}; \RR) = H^+(\widetilde{Y}^{(d)}) \oplus H^-(\widetilde{Y}^{(d)})$. The space $H^-(\widetilde{Y}^{(d)})$ is $4d$-dimensional and it is exactly the image of $H^-(\widetilde{X}^{(d)})$ by $S$. Let $\hat{\eta}_g^r = \eta_g^r - \eta_{-g}^r$ for $g \in Q^+ = \{1, i, j, k\}$ and $1 \leq r \leq d$. We have that each $\hat{\eta}_g^r$ is an absolute cycle since $\varphi_{-1}$ fixes every singularity. Therefore, $\hat{\eta}_g^r \in H_1^-(\widetilde{Y}^{(d)})$ and we obtain that $\{\hat{\eta}_g^r\}_{g \in Q^+, 1 \leq r \leq d}$ is a basis of $H_1^-(\widetilde{Y}^{(d)})$.
	
	Observe that $\widetilde{Y}^{(d)}$ has four horizontal cylinders. The matrix $T^{2r}$ maps the direction $S(p_r,q_r) = (-(4r+1),2)$ to $(-1,2)$. Therefore, understanding the direction $(p_r,q_r)$ on $\widetilde{X}^{(d)}$ is equivalent to understanding the direction $(-1,2)$ on $T^{2r} \cdot \widetilde{Y}^{(d)}$.
	
	\begin{figure}
		\includegraphics[width=\textwidth]{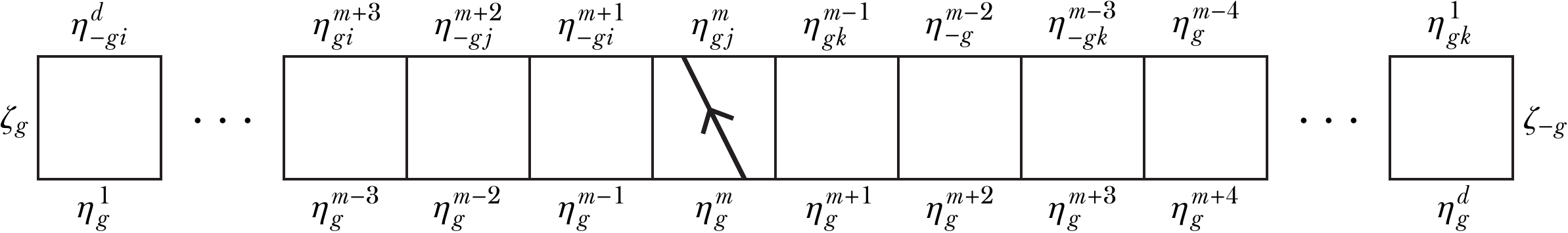}
		\caption{Direction $(-1,2)$ on $Y_g^{(d)}$.}
		\label{fig:direction}
	\end{figure}
	
	We will start the analysis for $r = 0$. For $g \in Q$, consider the trajectory induced by the direction $(-1,2)$ on $Y_g^{(d)}$ as in \Cref{fig:direction}. The resulting cylinder decomposition consists on eight cylinders. Indeed, observe that each cycle $\eta_g^r$ is intersected twice by such trajectories. Therefore, the total number of intersections of all the $\eta_g^r$ by all trajectories is $16d$. To obtain that there are exactly eight cylinders in this decomposition, it is therefore enough to show that each trajectory intersects exactly $2d$ cycles $\eta_g^r$.
	
	The trajectory in \Cref{fig:direction} intersects the following cycles:
	\begin{align*}
		&\eta_{g_1}^m, \eta_{g_2}^{m+1}, \eta_{g_3}^{m-1}, \eta_{g_4}^{m+2}, \eta_{g_5}^{m-2}, \dotsc, \eta_{g_{d-1}}^d, \eta_{g_d}^1, \zeta_{g_d}, \\
		&\eta_{g_{d+2}}^1, \eta_{g_{d+3}}^d, \dotsc, \eta_{g_{2d-3}}^{m-2}, \eta_{g_{2d-2}}^{m+2}, \eta_{g_{2d-1}}^{m-1}, \eta_{g_{2d}}^{m+1}, \eta_{g_{2d+1}}^m
	\end{align*}
	where the sequence $g_1, \dotsc, g_{2d+1}$ is obtained by (right-)multiplying $g$ successively by
	\begin{equation}\label{eq:prod}
		j, -i, k, -j, -1, i, -k \dotsc, j, 1, -i, k, \boxed{-1}, k, -i, 1, j, \dotsc, -k, i, -1, -j, k, -i, j. \tag{$\ast$}
	\end{equation}
	The boxed $-1$ comes from the intersection with the vertical side labelled as $\zeta_{g_d}$. 
	
	This sequence indeed describes a closed trajectory as $g_{2d+1} = g$. Indeed, the product can be computed from ``inside out'', using that $-1$ is in the centre of $Q$. We obtain that $g_{2d+1} = -g \cdot k^2 \cdot (-i)^2 \cdot 1^2 \cdot j^2 \dotsb (-i)^2 j^2$. Moreover, the number of times $1^2$ and $(-1)^2$ occur in this product is $(m-2)/2$, which is an even number as $m \bmod 4 = 2$, and the total number of terms is $d+1$, which is also an even number. Thus, $g_{2d+1} = g$ and we conclude that the cylinder decomposition induced by $(-1,2)$ has exactly eight cylinders. Moreover, we obtain that the action of $\Aut(\widetilde{Y}^{(d)})$ on these waist curves is ``well-behaved'' in the sense of \eqref{cond2}: naming the trajectory starting on $Y_g^{(d)}$ as $c_g^0$, we get that $(\varphi_{h})_\ast c_g^0 = c_{hg}^0$. 
	
	Now, if $r = 1$ then $Y_g^{(d)}$ is sheared horizontally in such a way that the labels $\eta_g^{m+1}$ and $\eta_{-gi}^{m+1}$ end up on the same square. We will consider this square to be the ``middle'' square and reglue the surface accordingly. The surface $T^2 \cdot \widetilde{Y}^{(d)}$ is the union of sheared and reglued versions of $Y_g^{(d)}$, for $g \in Q$, that we call $T^2 \cdot Y_g^{(d)}$. See \Cref{fig:T2_S_stairs_orbit} for an illustration.
	\begin{figure}
		\includegraphics[width=\textwidth]{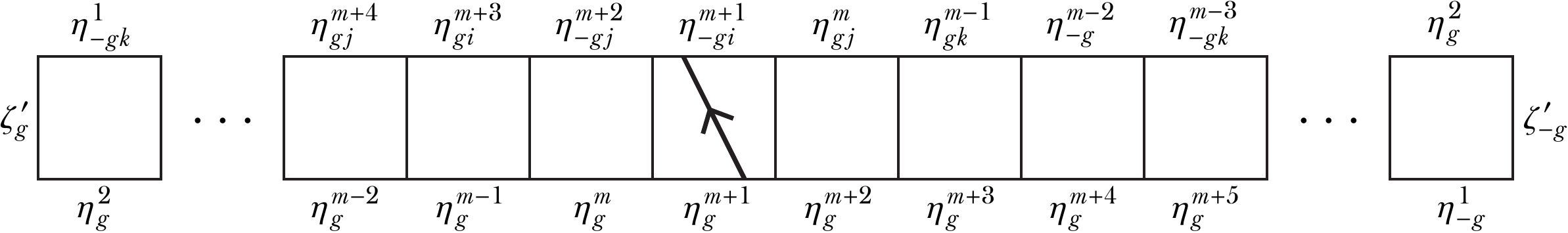}
		\caption{Direction $(-1,2)$ on $T^2 \cdot Y_g^{(d)}$. The gluings are cyclically shifted and the signs of elements of $Q$ on the labels $\eta_\bullet^1$ are changed.}
		\label{fig:T2_S_stairs_orbit}
	\end{figure}
	
	In general, $T^{2r} \cdot \widetilde{Y}^{(d)}$, for $0 \leq r < d$, is the surface obtained from $\widetilde{Y}^{(d)}$ by cyclically shifting the labels on the top sides $r$ times to the right, the ones on the bottom sides $r$ times to the left, and changing the signs of the elements of $Q$ of every label of the form $\eta_\bullet^s$ for $1 \leq s \leq r$. We conclude that the cylinder decomposition of $T^{2r} \cdot \widetilde{Y}^{(d)}$ induced by the direction $(-1, 2)$ consists of exactly eight cylinders in the same way as for the case $r = 0$ and denote their waist curves by $c_g^r$. The action of $G$ is then well-behaved in the sense of \eqref{cond2}. By construction, \eqref{cond1} also holds.
	
	Let $\hat{c}_g^r = c_g^r - c_{-g}^r$ for $g \in Q^+$. It remains to prove \eqref{cond3}, \eqref{cond4} and \eqref{cond5} to conclude the proof. We conjecture that these two conditions hold for every $d$ belonging to the congruence class $d = 3 \mod 8$.
	
	Nevertheless, the previous discussion allows us to compute the intersection numbers explicitly using a computer. Indeed, to obtain \eqref{cond3} we can compute the numbers $\langle \hat{c}_g^r, \hat{\eta}_h^s\rangle$ for each $0 \leq r, s \leq d$. Then, we can compute the determinant of the resulting matrix to show that it is not singular. This matrix also allows us to compute $\langle \hat{c}_g^r, \hat{c}_h^s \rangle$ by expressing each $\hat{c}_g^r$ in terms of the basis $\{\hat{\eta}_g^r\}_{g \in Q^+, 1 \leq r \leq d}$ to show \eqref{cond4} and \eqref{cond5}. The computations were done for $11 \leq d \leq 299$. Observe that $d = 35$ and $d = 203$ are the only elements of $\{11, \dotsc, 299\}$ satisfying $d = 3 \bmod 8$ and not belonging to $\D$, as $2 \times 35 = \binom{8}{4}$ and $2 \times 203 = \binom{29}{2}$. In this way, \Cref{thm:main} is proved.

	\medbreak
	\textbf{Acknowledgements:} I am grateful to Simion Filip and Nicolás Sanhueza-Matamala for useful discussions. I am also grateful to my advisors, Anton Zorich and Carlos Matheus.

	\sloppy\printbibliography
	
\end{document}